\documentclass[a4paper,11pt]{article}
\usepackage{latexsym}
\usepackage{amsfonts, amsthm}
\usepackage[all,cmtip]{xy}
\usepackage{rotating}
\usepackage{amsmath}
\usepackage{bbm}
\usepackage{amssymb}
\usepackage{scalefnt}
\usepackage{amscd}
\usepackage[english]{babel}
\usepackage{makeidx}
\usepackage[latin1]{inputenc}
\usepackage{fancyhdr}
\usepackage{diagrams}
\usepackage[font={small,it},aboveskip=0pt]{caption}
\usepackage{float}
\newtheorem{teo}{Theorem}[section]
\newtheorem{formula}{Formula}
\newtheorem{prop}[teo]{Proposition}
\newtheorem*{prop*}{Proposition} 
\newtheorem{lemma}[teo]{Lemma} 
\newtheorem*{teo*}{Theorem}

\newcommand{\rmk}{\textit{Remark. }}

\newtheorem{defi}[teo]{Definition}
\newtheorem*{defi*}{Definition}

\textwidth=500pt
\hoffset=-70pt
\voffset=-30pt
\textheight=700pt
\topmargin=-20pt

\begin{document}

\title{The Brauer-Kuroda formula for higher $S$-class numbers in dihedral extensions of number fields}
\author{Luca Caputo\footnote{Supported by an IRCSET fellowship.}}
\date{\today}
\maketitle

\noindent\textsc{Abstract -}\begin{small}{Let $p$ be an odd prime and let $L/k$ be a Galois extension of number fields whose Galois group is isomorphic to the dihedral group of order $2p$. Let $S$ be a finite set of primes of $L$ which is stable under the action of $\mathrm{Gal}(L/k)$. The Lichtenbaum conjecture on special values of the Dedekind zeta function at negative integers, together with Brauer formalism for Artin's $L$-functions, gives a (conjectural) formula relating orders of motivic cohomology groups of rings of $S$-integers and higher regulators of the subextensions of $L/k$. In analogy with the classical case of special values at $0$, we give an algebraic proof of this formula, i.e. without using the Lichtenbaum conjecture nor Brauer formalism. Our method also gives an interpretation of the regulator term as a higher unit index.\\

\noindent 2000 Mathematical Subject Classification: Primary $19F27$ Secondary $11R20$}\end{small}

\section{General setting and statement of the main result}\label{intro}
For a number field $E$ and a finite set of places $S$ of $E$, denote by $\zeta^S_{E}$ the Dedekind $S$-zeta function of $E$. For a complex number $s$, denote by $\zeta^S_{E}(s)^*$ the special value of $\zeta^S_{E}$ at $s$ (i.e. the first nontrivial coefficient of the Laurent expansion of $\zeta^S_{E}$ around $s$). Then by a well-known result of Dirichlet we have the formula 
\begin{equation}\label{sp0}
\zeta^S_{E}(0)^*=-\frac{h^S_E}{w_E}R^S_E
\end{equation}
where $h^S_E$ is the class number of the ring $\mathcal{O}^S_E$ of $S$-integers of $E$, $w_E$ is the order of the group of roots of unity of $E$ and $R^S_E$ is the regulator of $(\mathcal{O}^S_E)^\times$. There are conjectural analogues of this formula when $0$ is replaced by negative integers: more precisely, for any integer $m\geq 2$, a $S$-version of the Lichtenbaum conjecture reads (see \cite{Li}, \cite{Ko}, \cite{Ka}...)
\begin{equation}\label{spi}
\zeta^S_E(1-m)^*=(-1)^{t_{E,\,m}}\frac{h^S_{E,\,m}}{w_{E,\,m}}R_{E,\,m}
\end{equation}
Here $h^S_{E,\,m}$ is the order of the motivic cohomology group $H^2(\mathcal{O}^S_E,\,\mathbb{Z}(m))$ (which is finite  by the Bloch-Kato "conjecture"), $w_{E,\,m}$ is the order of the torsion subgroup of the motivic cohomology group $H^1(\mathcal{O}^S_E,\,\mathbb{Z}(m))$ (which is a finitely generated $\mathbb{Z}$-module by the Bloch-Kato "conjecture") and $R_{E,\,m}$ is the (motivic) regulator of $H^1(\mathcal{O}^S_E,\,\mathbb{Z}(m))$ (see Section \ref{regulcomp}).
In this paper we use the definition of motivic cohomology in terms of Bloch's higher Chow groups, in other words 
$$H^j(\mathcal{O}^S_E,\,\mathbb{Z}(m)):=CH^m(\mathrm{Spec}(\mathcal{O}^S_E),\,2m-j)$$
Finally, $t_{E,\,m}\in \mathbb{N}$ is given by 
$$t_{E,\,m}=\left\{\begin{array}{cl}1&\textrm{if $m\equiv1$ mod $4$}\\r_1(E)+r_2(E)&\textrm{if $m\equiv2$ mod $4$}\\r_1(E)&\textrm{if $m\equiv3$ mod $4$}\\r_2(E)&\textrm{if $m\equiv0$ mod $4$}\end{array}\right.$$ 
where $r_1(E)$ (resp. $r_2(E)$) is the number of real places (resp. complex places) of $E$.\\

We now list some known facts about the Lichtenbaum conjecture and motivic cohomology: for any further detail and reference to relevant literature we refer the reader to Kolster's and Kahn's excellent surveys (\cite{Ko} and \cite{Ka}). Here we just recall that, if $m\geq 2$, we have
\begin{equation}\label{mok}
H^1(\mathcal{O}_E^S,\,\mathbb{Z}(m))\otimes \mathbb{Q}\cong K_{2m-1}(\mathcal{O}_E^S)\otimes \mathbb{Q}
\end{equation}
and, since the Bloch-Kato conjecture on the Galois symbol holds (see \cite{We}), we also have  
\begin{equation}\label{moet}
H^j(\mathcal{O}^S_E,\,\mathbb{Z}(m))\otimes_{\mathbb{Z}}\mathbb{Z}_\ell\cong H_{\acute{e}t}^{j}(\mathcal{O}^S_E\textstyle{[\frac{1}{\ell}]},\,\mathbb{Z}_\ell(m)) \quad j=1,\,2
\end{equation} 
where $\ell$ is any prime. Whenever a Galois action is defined on $E$, the above isomorphisms are invariant under this action.\\

Then thanks to Wiles' proof of the main conjecture in Iwasawa theory, the Lichtenbaum conjecture is known for $m\geq 2$ even and $E$ totally real abelian and it is also known to hold up to power of $2$ for $m\geq 2$ even and $E$ totally real. More generally it is known to hold up to power of $2$ for $E$ abelian and any $m\geq 2$.\\

Note that $H^1(\mathcal{O}^S_E,\,\mathbb{Z}(m))$ does not depend on $S$ (use for example (\ref{moet}) and the correspondig property for étale cohomology groups, see Lemma \ref{trivialrelations}): so we shall omit the reference to $S$ in $w_{E,\,m}^S$ and $R_{E,\,m}^S$.\\

Now let $p$ be an \emph{odd} prime. Let $D=D_p$ denote the dihedral group of order $2p$: in particular 
$$D=\langle\tau,\,\sigma\,|\,\tau^{p}=\sigma^2=1,\,\sigma\tau\sigma=\tau^{-1}\rangle$$ 
Let $L/k$ be a Galois extension of number fields such that $\mathrm{Gal}(L/k)\cong D$ (in the rest of this paper we shall identify those groups). Let $K$ (resp. $K'$) be the subfield of $L$ fixed by $\langle\sigma\rangle$ (resp. by $\langle\tau^2\sigma\rangle$): in particular $K'=\tau(K)$. Let $F$ be the subfield of $L$ fixed by $\langle\tau\rangle$: set $G=\mathrm{Gal}(L/F)$ and $\Delta=\mathrm{Gal}(F/k)$. \\

Let $S$ be a finite set of places of $L$ which is stable under the action of $D$ and contains the archimedean primes (we shall consider only sets of primes containing the archimedean ones, so we will not further mention this property). For any subfield $E$ of $L$ containing $k$, the set of places of $E$ which lie below those of $S$ will be denoted by $S_E$ or simply again by $S$ if no misunderstanding is possible. The existence of the nontrivial $D$-relation (in the sense of \cite{DD}, see Definition 2.1 and Example 2.4 of that paper) 
\begin{equation}\label{relation}
\{1\} - 2\langle \sigma\rangle - G +2 D
\end{equation}
together with Brauer formalism for Artin's $L$-functions, gives the following formula
\begin{equation}\label{zetas}
\zeta^S_{L}(s)=\zeta^S_{F}(s)\frac{\zeta^S_{K}(s)^2}{\zeta^S_k(s)^2}
\end{equation}
Considering the special value at $0$ of (\ref{zetas}) and using (\ref{sp0}), we get 
\begin{equation}\label{formula}
h^S_{L}=h^S_{F}\frac{(h^S_{K})^2}{(h^S_k)^2}\cdot\frac{w_k^2w_{L}}{w_{K}^2w_{F}}\cdot\frac{(R^S_{K})^2R^S_{F}}{(R^S_k)^2R^S_{L}}
\end{equation}
which is commonly referred to as the (classical) Brauer-Kuroda formula (for dihedral extensions of order $2p$). It can be shown that the $w$-factor is actually trivial. More interestingly, the factor involving regulators and roots of unity can be expressed as an index of subgroups whose definition involves units of subextensions of $L/k$ (see \cite{Ba}, \cite{Ja}, \cite{HK}, \cite{Lem}, ...). Considering special values at negative integers and using (\ref{spi}), we get of course, for any $m\geq2$, a conjectural analogue of (\ref{formula}) 
\begin{equation}\label{bk}
h^S_{L,\,m}= h^S_{F,\,m}\frac{(h^S_{K,\,m})^2}{(h_{k,\,m})^2}\cdot\frac{(w^S_{k,\,m})^2w^S_{L,\,m}}{(w^S_{K,\,m})^2w^S_{F,\,m}}\cdot\frac{(R^S_{K,\,m})^2R^S_{F,\,m}}{(R^S_{k,\,m})^2R^S_{L,\,m}}
\end{equation}
(it is easy to see that indeed the signs appearing in (\ref{spi}) cancel each other out in (\ref{bk}), use for example Lemma 2.37 of \cite{DD}).\\

In this paper we prove (\ref{bk}) without using the Lichtenbaum conjecture and actually in an algebraic way, i.e. we make no use of $L$-functions at all. It is worth noting that the $w$-term in the above formula is trivial, as in the classical case, thanks to the following lemma (if $A$ is a ring and $M$ is an $A$-module, $\mathrm{tor}_{A}(M)$ denotes the torsion submodule of $M$). 

\begin{lemma}\label{torsion}
Let $S$ be a finite set of places of $L$ which is invariant under the action of $\mathrm{Gal}(L/k)$. Then
$$w_{L,\,m}=w_{F,\,m}\quad \textrm{and}\quad w_{K,\,m}=w_{k,\,m}$$
\end{lemma}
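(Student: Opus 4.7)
The plan is to reduce everything, prime by prime, to equalities between images of the cyclotomic character, where the dihedral structure forces the conclusion. First I would use (\ref{moet}) to identify, for each prime $\ell$, the $\ell$-primary part of $\mathrm{tor}(H^1(\mathcal{O}_E^S,\mathbb{Z}(m)))$ with $\mathrm{tor}(H^1_{\acute{e}t}(\mathcal{O}_E^S[\tfrac{1}{\ell}],\mathbb{Z}_\ell(m)))$. From the Bockstein sequence associated to $0\to\mathbb{Z}_\ell(m)\to\mathbb{Q}_\ell(m)\to\mathbb{Q}_\ell/\mathbb{Z}_\ell(m)\to 0$, together with the vanishing of $H^0_{\acute{e}t}$ with $\mathbb{Q}_\ell(m)$-coefficients for $m\geq 1$ (since the image of $\chi_\ell^m$ is infinite), this torsion submodule is identified with $H^0(G_E,\mathbb{Q}_\ell/\mathbb{Z}_\ell(m))=(\mu_{\ell^\infty}^{\otimes m})^{G_E}$. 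Writing $\chi_\ell:G_k\to\mathbb{Z}_\ell^*$ for the cyclotomic character, this fixed module has order $\ell^{a_\ell(E)}$, where
\[a_\ell(E)=\max\bigl\{a\geq 0:\chi_\ell^m(G_E)\subset 1+\ell^a\mathbb{Z}_\ell\bigr\}.\]
Hence the lemma reduces to proving that, for every prime $\ell$,
\[\chi_\ell(G_L)=\chi_\ell(G_F)\quad\text{and}\quad\chi_\ell(G_K)=\chi_\ell(G_k),\]
as the corresponding equalities for $\chi_\ell^m$ follow by taking $m$-th powers, and these in turn force $a_\ell(L)=a_\ell(F)$ and $a_\ell(K)=a_\ell(k)$.

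The key observation is that $\chi_\ell(G_k)$, being a subgroup of $\mathbb{Z}_\ell^*$, is abelian. Consequently the natural surjection $D=G_k/G_L\twoheadrightarrow\chi_\ell(G_k)/\chi_\ell(G_L)$ factors through the abelianization $D^{\mathrm{ab}}=D/\langle\tau\rangle$, which has order $2$ precisely because $p$ is odd. Therefore $[\chi_\ell(G_k):\chi_\ell(G_L)]$ divides $2$.

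The conclusion is then immediate: $[\chi_\ell(G_F):\chi_\ell(G_L)]$ divides both $[G_F:G_L]=[L:F]=p$ and $[\chi_\ell(G_k):\chi_\ell(G_L)]\mid 2$, so it divides $\gcd(p,2)=1$; the same argument applied to the tower $G_L\subset G_K\subset G_k$, where $[G_k:G_K]=[K:k]=p$, yields the second equality. I do not foresee a real obstacle: the whole argument rests on the elementary coprimality of $p$ with $|D^{\mathrm{ab}}|=2$, together with the formal reduction to cyclotomic images. The only step deserving a small amount of care is the preliminary identification of the $\ell$-primary torsion of $H^1$ with the $G_E$-invariants of $\mathbb{Q}_\ell/\mathbb{Z}_\ell(m)$, but this is entirely standard.
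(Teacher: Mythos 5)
Your proof is correct and follows essentially the same route as the paper's: both identify the $\ell$-primary torsion with $H^0(G_E,\mathbb{Q}_\ell/\mathbb{Z}_\ell(m))$ and reduce the lemma to showing that the cyclotomic character has the same image on $G_L$ as on $G_F$ (resp.\ on $G_K$ as on $G_k$). Your coprimality argument via $D^{\mathrm{ab}}\cong\mathbb{Z}/2\mathbb{Z}$ is just a more explicit rendering of the paper's one-line observation that $L\cap F(\mu_{\ell^\infty})=F$ because $L/k$ is non-abelian.
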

\begin{proof}
Recall that for any number field $E$ and any prime $\ell$, we have ($m\geq 2$)
$$\mathrm{tor}_{\mathbb{Z}}(H^{1}(\mathcal{O}_E^S,\,\mathbb{Z}(m)))\otimes_{\mathbb{Z}}\mathbb{Z}_\ell\cong\mathrm{tor}_{\mathbb{Z}_\ell}(H_{\acute{e}t}^{1}(\mathcal{O}_E^S\textstyle{[\frac{1}{\ell}]},\,\mathbb{Z}_\ell(m)))\cong H^0(E,\,\mathbb{Q}_\ell/\mathbb{Z}_\ell(m))$$
and the latter has cardinality $v_\ell(\kappa_E(\gamma_E)^{m}-1)$, where $v_\ell$ is the $\ell$-adic valuation such that $v_\ell(\ell)=1$, $\kappa_E:\Gamma_E\to \mathbb{Z}_\ell^\times$ is the cyclotomic character evaluated on $\Gamma_E=\mathrm{Gal}(E(\mu_{\ell^\infty})/E)$ and $\gamma_E$ is any topological generator of $\Gamma_E$. Now, since $L/k$ is not abelian, $L\cap F(\mu_{\ell^\infty})=F$. This shows that 
$$v_\ell(\kappa_L(\gamma_L)^{m}-1)=v_\ell(\kappa_F(\gamma_F)^{m}-1)$$
since restriction maps $\Gamma_L$ isomorphically onto $\Gamma_F$. A similar argument apply for $K$ and $k$.\\
\end{proof}

The proof of the conjectural formula above is achieved by summing up the following two results (which are proved respectively in Section \ref{secp} and Section \ref{regulcomp}), which are maybe interesting in their own right. 

\begin{formula}\label{alg}
The following formula holds
$$h^S_{L,\,m}= p^{-\alpha_m}h^S_{F,\,m}\left(\frac{h^S_{K,\,m}}{h^S_{k,\,m}}\right)^2 u_m,$$
where $\alpha_m=\mathrm{rk}_{\mathbb{Z}_p}H_{F,\,m}-\mathrm{rk}_{\mathbb{Z}_p}H_{k,\,m}$ and $$u_m=\frac{(H_{L,\,m}:\,H_{F,\,m}H_{K,\,m}H_{K',\,m})((\overline{H}_{F,\,m})^{\Delta}:\overline{H}_{k,\,m})}{((\overline{H}_{F,\,m})^{D}:\overline{H}_{k,\,m})}$$
where, for a number field $E$, we have set $H_{E,\,m}=H^1(\mathcal{O}^S_E,\,\mathbb{Z}_p(m))$ and  $\overline{H}_{E,\,m}=H_{E,\,m}/\mathrm{tor}_{\mathbb{Z}}H_{E,\,m}$. 
\end{formula}

\begin{formula}\label{ind} 
With notation as in the previous formula, the following formula holds
  $$p^{-\alpha_m}u_m=\frac{(R_{K,\,m})^2R_{F,\,m}}{(R_{k,\,m})^2R_{L,\,m}}$$
\end{formula}

Note that indeed $u_m$ and $\alpha_m$ do not depend on $S$ because $H_{E,\,m}$ doesn't. It will turn out that, as in the classical case, $u_m$ is a power of $p$.\\ 

We will divide the proof of Formula \ref{alg} in two parts, studying separatedly $p$-parts and $\ell$-parts for any prime $\ell\ne p$ (using then (\ref{moet}) to glue all parts together). It should be stressed that the proof for $\ell$-parts with $\ell\ne p$ is really much easier: using just the fact that the cohomology groups involved are cohomological Mackey functors (in the sense of Dress, see for example \cite{Bo}) and that $D=D_p$ is not $\ell$-hypoelementary, we get even more precise structural relations. The proof for $p$-parts uses mainly descent and co-descent results for étale cohomology groups (which are described in \cite{KM}, see also \cite{Ko}). This is essentially the only arithmetic information which is needed, the rest of the proof being a technical algebraic computation. The proof of Formula \ref{alg} given here can probably be generalized without too much effort to metabelian groups whit commutators subgroup of order a power of $p$ and index coprime with $p$. However that  seems not to be the best approach for the general case of an arbitrary Galois group. We believe anyway that there exists a general algebraic proof of Brauer-Kuroda formulas for an arbitrary finite Galois extension (a higher analogue of \cite{dS}). It is worth noting that any proof of the classical version of Formula \ref{alg}  (see \cite{Ja}, \cite{HK}, \cite{Lem}, ...) uses class field theory and genus theory and considers $p$-parts and $\ell$-parts for $\ell\ne p$ separatedly.\\ 

In the last section we perform the proof of Formula \ref{ind}.  The translation of $u_m$ in terms of higher (or motivic) regulators is done using methods from representation theory which have been introduced by the Dokchitser brothers (see for example \cite{DD}). In particular we follow the strategy of Bartel (see \cite{Ba}), who used Dokchitser's ideas to prove a statement analogous to ours in the classical case. In order to use these techniques, we need a higher version of Dirichlet's theorem on the Galois structure of units, which we state and prove, since we could not find it in the literature.\\  

%It is maybe worth noting that Brauer-Kuroda formulas do not shed much light on the Lichtenbaum conjecture. Of course, there may well exist a general proof of Brauer-Kuroda formulas for arbitrary finite Galois extensions which make no use of the Lichtenbaum conjecture (see \cite{dS} for the classical case or \cite{DD}, footnote at page 7, for the case of elliptic curves). But even if all the possibile Brauer-Kuroda formulas hold, this only says that a possible error term in the Lichtenbaum conjecture is trivial on relations (again in the sense of \cite{DD}). %Finally one could wonder whether known cases of the Lichtenbaum conjecture together with the appropriate Brauer-Kuroda formula could give new cases of the conjecture. For example, a good try would be a Galois extension $E/k$ of a totally real number field $k$ with $\mathrm{Gal}(E/k)$ isomorphic to the quaternion group $Q$, since all the subgroups of $Q$ are normal with abelian quotient (and it is easy to see that any proper subextension of $E/k$ has to be totally real). Unfortunately there is no $Q$-relation which includes the regular representation and involves exclusively representations induced from trivial representations of subgroups. But this is precisely the condition needed to apply Brauer formalism of $L$-functions and gather results on $E$ from information coming from its proper subextensions. Thus, Lichtenbaum formulas for proper subextensions of $E/k$ do not imply (at least in this way) the Lichtenbaum conjecture for $E$.

\emph{Notation and standard results} 
\begin{itemize}
	\item As before, throughout the paper, if $A$ is a commutative ring and $M$ is an $A$-module, $\mathrm{tor}_{A}(M)$ denotes the torsion submodule of $M$. We will also use the notation $\overline{M}$ for $M/\mathrm{tor}_A M$ without any specific mention to $A$, since it will be clear from the context which is the ring we are considering. Finally, for any $a\in A$, we set $M[a]=\{m\in M\,|\, am=0\}$.
	\item Let $H$ be a finite group. We denote by $N_{H}=\sum_{h\in H}h\in A[H]$ the norm element and by $I_H\subseteq A[H]$ the augmentation ideal. If $B$ is a $A[H]$-module, we use the following notation 	
\begin{itemize}
  \item $B^H=\{b\in B\, \mid\, hb=b\,\, \textrm{for all $h\in H$}\}$;
  \item $B_H=B/I_HB$;
	\item $B[N_H]=\{b\in B\, \mid\, N_Hb=0\}$.
\end{itemize}
If $\ell$ is a prime, $A=\mathbb{Z}_\ell$ and $H$ is a $q$-group for some prime $q\ne \ell$, then 
$$B^H=N_HB \quad\textrm{and}\quad B[N_H]=I_HB$$ 
since $B$ is $q$-divisible (being a $\mathbb{Z}_\ell$-module) and hence $H$-cohomologically trivial. 
\end{itemize}

\emph{Aknowledgements} I would like to thank Kevin Hutchinson for many enlightening discussions on this subject and Manfred Kolster for making me aware of the work of the Dokchitser brothers.

\section{A formula relating higher class numbers and a higher units index}\label{secp}
In this section we prove Formula \ref{alg}. First we study the $p$-part of the problem which is the most delicate.\\ The natural number $m\geq 2$ will be fixed throughout the section. For any number field $E$ such that $k\subseteq E\subseteq L$ and any finite set $S$ of primes of $L$, we set 
$$U_{E,m}=H_{\acute{e}t}^{1}(\mathcal{O}_E^S\textstyle{[\frac{1}{p}]},\,\mathbb{Z}_p(m))$$
$$A^S_{E,m}=H_{\acute{e}t}^{2}(\mathcal{O}_E^S\textstyle{[\frac{1}{p}]},\,\mathbb{Z}_p(m))$$
(in fact, $U_{E,\,m}$ does not depend on $S$, see Lemma \ref{trivialrelations}) We also fix for this section a finite set $T$ of primes of $L$ such that
\begin{itemize}
	\item $T$ is stable under the action of $D=\mathrm{Gal}(L/k)$;
	\item $T$ contains those primes which ramify in $L/k$;
	\item $T$ contains all the primes above $p$.
\end{itemize}
Since $T$ and $m$ are fixed for this section we will also use the notation $U_E$ for $U_{E,m}$ and $A_E$ for $A^T_{E,m}$. Both $A_E$ and $U_E$ are abelian groups: however, because of their analogies with the ideal class group and the unit group of $E$ respectively, we are going to use multiplicative notation for them.

Note that if $Q$ is a group of automorphisms of $E$ of order $2$, then  $U_E^Q=U_{E^Q}$ and $A_E^Q=A_{E^Q}$: this follows from the fact that $\mathbb{Z}_p(m)$ is $Q$-cohomologically trivial. The following well-known result gives us the description of $G$-descent for $U_E$ and $A_E$ (recall that $G=\mathrm{Gal}(L/F)$). 

\begin{prop}\label{descent} 
The natural map $U_F\to U_L^{G}$ is an isomorphism and we have an exact sequence
$$0\to H^1(G,U_L)\to A_F\to A_L^G\to H^2(G,\,U_L)\to0$$ 
\end{prop}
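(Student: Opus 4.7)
The plan is to apply the Hochschild--Serre (Leray) spectral sequence to the étale Galois cover $\mathrm{Spec}(\mathcal{O}_L^T[\frac{1}{p}])\to \mathrm{Spec}(\mathcal{O}_F^T[\frac{1}{p}])$ with Galois group $G=\mathrm{Gal}(L/F)$:
$$E_2^{i,j}=H^i(G,\,H^j_{\acute{e}t}(\mathcal{O}_L^T[\textstyle\frac{1}{p}],\,\mathbb{Z}_p(m)))\Longrightarrow H^{i+j}_{\acute{e}t}(\mathcal{O}_F^T[\textstyle\frac{1}{p}],\,\mathbb{Z}_p(m)).$$
My aim is to show that only two rows contribute, forcing the spectral sequence to essentially degenerate into the four-term exact sequence in the statement.

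First I would establish the two row-vanishings I need. For $j=0$: $H^0_{\acute{e}t}(\mathcal{O}_L^T[\frac{1}{p}],\,\mathbb{Z}_p(m))=\mathbb{Z}_p(m)^{G_L}=0$, because the power $\chi^m$ of the cyclotomic character has infinite image on $G_L$ for any $m\geq 1$ (no number field contains $\mu_{p^\infty}$). For $j\geq 3$: one invokes the cohomological dimension bound $\mathrm{cd}_p\,\mathcal{O}_E^T[\frac{1}{p}]\leq 2$, valid because $p$ is odd and $T$ contains both the primes above $p$ and the archimedean primes.

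Once only rows $j=1$ (entries $H^i(G,U_L)$) and $j=2$ (entries $H^i(G,A_L)$) survive on $E_2$, the only potentially nonzero differential is $d_2:E_2^{i,2}\to E_2^{i+2,1}$. In total degree $1$, the filtration on $U_F$ collapses to the single graded piece $E_2^{0,1}=U_L^G$, identified via the edge map with $U_F$; this is the first assertion. In total degree $2$, the filtration on $A_F$ yields an extension
$$0\to E_\infty^{1,1}\to A_F\to E_\infty^{0,2}\to 0,$$
with $E_\infty^{1,1}=H^1(G,U_L)$ (no differential touches this spot) and $E_\infty^{0,2}=\ker(d_2\colon A_L^G\to H^2(G,U_L))$. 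To finish I would show that $d_2$ is surjective onto $H^2(G,U_L)$, which is equivalent to $E_\infty^{2,1}=\mathrm{coker}(d_2)$ being zero; and this is automatic because $E_\infty^{2,1}$ is a subquotient of $H^3_{\acute{e}t}(\mathcal{O}_F^T[\frac{1}{p}],\,\mathbb{Z}_p(m))$, which vanishes by the same dimension bound. Splicing the surjection $A_L^G\twoheadrightarrow H^2(G,U_L)$ with the extension above gives the announced four-term exact sequence.

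The main obstacle, to the extent there is one, is not the spectral sequence bookkeeping but rather justifying the two input vanishings in the precise setup of $T$-integers: one needs the standard identification of $H^*_{\acute{e}t}(\mathcal{O}_E^T[\frac{1}{p}],\,-)$ with Galois cohomology of the maximal $T$-ramified extension of $E$, together with the $\mathrm{cd}_p\leq 2$ bound, the latter crucially using that $p$ is odd to discard the contribution of real places. Both facts are classical and are reviewed in the survey \cite{Ko} cited earlier; everything else is formal.
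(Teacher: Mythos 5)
Your argument is correct. The paper itself gives no proof here beyond citing Theorem 1.2 of \cite{KM}, and that theorem is established by exactly the Hochschild--Serre computation you describe (two surviving rows from $H^0=0$ and $\mathrm{cd}_p\leq 2$ for $p$ odd with $T\supseteq S_p\cup S_\infty$, then splicing the degree-$2$ filtration with the surjectivity of $d_2$ forced by the vanishing of $H^3$), so you have essentially reconstructed the proof of the cited result rather than found a different route.
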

\begin{proof}
See \cite{KM}, Theorem 1.2. Note that the hypotheses are satisfied thanks the properties we required on $T$.
\end{proof}

We will often identify $U_F$ with its image in $U_L$ (the same will be done with $U_K$, $U_K'$ and $U_k$). In the same way, we will identify $A_K$ and $A_{K'}$ with their images in $A_L$. We record now the following easy lemma which will be used repeatedly for the rest of this section.

\begin{lemma}\label{nuccio} 
Let $M$ be a $2$-divisible $D$-module: then the Tate isomorphisms $\widehat{H}^j(G,M)\cong \widehat{H}^{j+2}(G,M)$ are $\Delta$-antiequivariant, so that in particular, if $H^{j}(G,\,M)$ is finite for any $j\in \mathbb{Z}$, we have
$$
|\widehat{H}^{j}(D,M)|=|\widehat{H}^{j+2}(G,M)|/|\widehat{H}^{j+2}(D,M)|
$$
\end{lemma}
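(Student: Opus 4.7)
The plan is to exploit the cyclicity of $G=\langle\tau\rangle$ of odd prime order $p$. The Tate cohomology $\widehat{H}^\ast(G,M)$ is then $2$-periodic, with an explicit periodicity isomorphism $\alpha_j\colon \widehat{H}^j(G,M)\xrightarrow{\sim}\widehat{H}^{j+2}(G,M)$ given by cup product with a chosen generator $\chi$ of $\widehat{H}^2(G,\mathbb{Z})\cong \mathrm{Hom}(G,\mathbb{Q}/\mathbb{Z})$. To prove antiequivariance, I would first observe that the nontrivial element $\sigma\in\Delta=D/G$ acts on $G$ by conjugation as $\tau\mapsto\tau^{-1}$, so it acts as $-1$ on $\mathrm{Hom}(G,\mathbb{Q}/\mathbb{Z})$, giving $\sigma\cdot\chi=-\chi$ in $\widehat{H}^2(G,\mathbb{Z})$. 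Functoriality of the cup product then yields
$$\sigma\cdot\alpha_j(x)=\sigma\cdot(\chi\cup x)=(\sigma\cdot\chi)\cup(\sigma\cdot x)=-\alpha_j(\sigma\cdot x),$$
which is precisely the claimed antiequivariance.

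For the numerical consequence I would use two standard inputs. First, since $|\Delta|=2$ and $M$ is $2$-divisible, every $\Delta$-subquotient of $\widehat{H}^\ast(G,M)$ is $2$-divisible, hence $\Delta$-cohomologically trivial; the Hochschild--Serre spectral sequence for $1\to G\to D\to \Delta\to 1$ therefore collapses to
$$\widehat{H}^j(D,M)=\widehat{H}^j(G,M)^\Delta$$
in every degree. Second, any $2$-divisible $\Delta$-module $N$ splits canonically as $N=N^+\oplus N^-$ into the $\pm 1$-eigenspaces for $\sigma$, so $|N|=|N^+|\cdot|N^-|$ when $N$ is finite.

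Combining these, antiequivariance of $\alpha_j$ says it sends the $+1$-eigenspace of $\widehat{H}^j(G,M)$ isomorphically onto the $-1$-eigenspace of $\widehat{H}^{j+2}(G,M)$. Hence
$$|\widehat{H}^j(D,M)|=|\widehat{H}^j(G,M)^+|=|\widehat{H}^{j+2}(G,M)^-|=\frac{|\widehat{H}^{j+2}(G,M)|}{|\widehat{H}^{j+2}(G,M)^+|}=\frac{|\widehat{H}^{j+2}(G,M)|}{|\widehat{H}^{j+2}(D,M)|},$$
which is the required identity. The one mildly delicate point I anticipate is the sign-tracking in the antiequivariance step, namely the verification that $\sigma$ acts as $-1$ on the canonical generator of $\widehat{H}^2(G,\mathbb{Z})$ and that the cup product is natural with respect to the conjugation action; once these are in hand, everything else is routine bookkeeping with eigenspace decompositions.
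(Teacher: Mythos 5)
Your proof is correct and follows essentially the same route as the paper: the antiequivariance is obtained exactly as there, by cup product with a generator $\chi$ of $\widehat{H}^2(G,\mathbb{Z})\cong\mathrm{Hom}(G,\mathbb{Q}/\mathbb{Z})$ on which $\sigma$ acts by $-1$, and the paper simply leaves the counting step (which you carry out via $\widehat{H}^j(D,M)\cong\widehat{H}^j(G,M)^\Delta$ and the $\pm$-eigenspace decomposition) implicit. The only quibble is cosmetic: the unique $2$-divisibility of the groups $\widehat{H}^j(G,M)$ comes from their being annihilated by the odd prime $p=|G|$ rather than from the $2$-divisibility of $M$; the hypothesis on $M$ is instead what kills the $2$-primary part of $\widehat{H}^j(D,M)$, so that restriction identifies it with $\widehat{H}^j(G,M)^\Delta$ in every degree.
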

\begin{proof}
We only need to show that Tate's isomorphism is $\Delta$-antiequivariant. Recall that the Tate isomorphism is given by the cup product with a fixed generator $\chi$ of $H^2(G,\mathbb{Z})$:
\begin{equation*}\begin{array}{ccc}
\widehat{H}^i(G,M)&\longrightarrow&\widehat{H}^{i+2}(G,M)\\
x&\longmapsto&x\cup \chi
\end{array}\end{equation*}
The action of $\delta \in \Delta$ on $\widehat{H}^i(G,M)$ is $\delta_*$ in the notation of \cite{NSW}, I.5 and this action is $-1$ on $H^2(G,\mathbb{Z})$ as can immediately be seen through the isomorphism $H^2(G,\mathbb{Z})\cong H^1(G,\,\mathbb{Q}/\mathbb{Z})=\mathrm{Hom}(G,\mathbb{Q}/\mathbb{Z})$ (which comes from the exact sequence $0\to \mathbb{Z}\to \mathbb{Q}\to \mathbb{Q}/\mathbb{Z}\to 0$ and the fact that $\mathbb{Q}$ is $G$-cohomologically trivial being $p$-divisible). Then, by Proposition 1.5.3 of \cite{NSW}, $\delta_{*}(x\cup\chi)=-(\delta_*x)\cup \chi$ which gives the result.
\end{proof}

The next lemma deals with the subgroup $A_KA_{K'}\subseteq A_L$ but there is an analogous version for $U_KU_{K'}\subseteq U_L$ (just replace $A$ by $U$ in the statement).

\begin{lemma}\label{esto}
The subgroup $A_KA_{K'}\subseteq A_L$ is a $D$-module and 
$$A_KA_{K'}=\prod_{j=0}^{p-2}A_K^\tau=\prod_{j=0}^{p-2}A_{\tau(K)}$$ 
Moreover $I_GA_L\subseteq A_KA_{K'}$. 
\end{lemma}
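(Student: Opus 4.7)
The plan is to establish the inclusion $I_GA_L\subseteq A_KA_{K'}$ first; the $D$-stability and the product decomposition will then fall out by formal manipulations.

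The key reduction is to identify $A_K=(1+\sigma)A_L$ and $A_{K'}=(1+\tau^2\sigma)A_L$. These hold because $\langle\sigma\rangle$ acts cohomologically trivially on the $\mathbb{Z}_p$-module $A_L$ (its order $2$ being prime to $p$), combined with the observation made just before Proposition~\ref{descent} that $A_L^{\langle\sigma\rangle}=A_K$; the analogous identity for $A_{K'}$ uses $K'=\tau(K)$ and the relation $\tau\sigma\tau^{-1}=\tau^2\sigma$. For any $x\in A_L$, writing additively for clarity, $(1+\sigma)x\in A_K$ and $(1+\tau^2\sigma)x\in A_{K'}$, so their difference $(1+\sigma)x-(1+\tau^2\sigma)x=(1-\tau^2)\sigma x$ lies in $A_K+A_{K'}$. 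Since $\sigma$ is an automorphism of $A_L$ and $x$ ranges over $A_L$, I would deduce $(1-\tau^2)A_L\subseteq A_KA_{K'}$.

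To pass from $(1-\tau^2)$ to $(1-\tau)$, I would note that $1+\tau$ is a unit in $\mathbb{Z}_p[G]$: the identity $(1+\tau)\sum_{j=0}^{p-1}(-\tau)^j=1+\tau^p=2$ combined with the invertibility of $2$ in $\mathbb{Z}_p$ (recall $p$ is odd) exhibits an explicit inverse. Hence $(1-\tau)A_L\subseteq(1-\tau^2)A_L\cdot\mathbb{Z}_p[G]\subseteq A_KA_{K'}$, and since $I_G=(1-\tau)\mathbb{Z}_p[G]$, this gives $I_GA_L\subseteq A_KA_{K'}$.

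The rest is routine. For each $j$ and each $a\in A_K$, the element $(\tau^j-1)a$ lies in $I_GA_L\subseteq A_KA_{K'}$, so $\tau^j(a)\in A_KA_{K'}$, hence $A_{\tau^j(K)}=\tau^j(A_K)\subseteq A_KA_{K'}$ for every $j$. Thus $\prod_j A_{\tau^j(K)}\subseteq A_KA_{K'}$, and the reverse inclusion is obvious. The same containment at once shows that $A_KA_{K'}$ is preserved by $\tau$ (which permutes the factors cyclically) and by $\sigma$ (which permutes the indices by $j\mapsto -j$, since $\sigma\tau^j\sigma^{-1}=\tau^{-j}$), and so is a $D$-submodule of $A_L$. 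No real obstacle arises; the only mildly clever point is the unit property of $1+\tau$ in $\mathbb{Z}_p[G]$, which enables the passage from $(1-\tau^2)$ to $(1-\tau)$.
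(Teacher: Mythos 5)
Your proof is correct, and it is genuinely different in character from the paper's, which gives no argument at all: it simply cites Halter-Koch (\cite{HK}, Lemma 1) for the product decomposition and $D$-stability, and Lemmermeyer (\cite{Lem}, Lemma 3.3) for the containment $I_GA_L\subseteq A_KA_{K'}$, both of which are proved in those references for ordinary ideal class groups. Your self-contained argument exploits what the $\mathbb{Z}_p$-module setting gives for free: since $A_L$ is $2$-divisible and hence cohomologically trivial for the order-$2$ subgroups, you get the exact identities $A_K=(1+\sigma)A_L$ and $A_{K'}=(1+\tau^2\sigma)A_L$ (in the classical setting one only has the inclusion $(1+\sigma)\mathrm{Cl}(L)\subseteq A_K$, which is in fact all that your computation $(1+\sigma)x-(1+\tau^2\sigma)x=(1-\tau^2)\sigma x$ uses). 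You also reorganize the logic economically, deducing the product decomposition and $D$-stability from the $I_G$-containment rather than proving them separately as the two cited lemmas do. One minor simplification: the step where you invert $1+\tau$ in $\mathbb{Z}_p[G]$ is correct but unnecessary --- since $p$ is odd, $\tau^2$ already generates $G$, so $(1-\tau^2)\mathbb{Z}_p[G]=I_G=(1-\tau)\mathbb{Z}_p[G]$ and hence $(1-\tau^2)A_L=I_GA_L=(1-\tau)A_L$ without any appeal to the invertibility of $2$; this version of the argument is the one that survives in the classical integral setting.
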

\begin{proof}
For the first assertion, see \cite{HK}, Lemma 1. For the last one, see \cite{Lem}, Lemma 3.3.
\end{proof}

We now start with the proof of the $p$-part of Formula \ref{alg}. 

\begin{lemma}\label{es}
Define $\iota: A_K\oplus A_{K'}\to A_L$ as $\iota(a,\,a')=aa'$. Then there is an exact sequence as follows
$$0\to H^{0}(D,\,A_L)\to A_K\oplus A_{K'}\stackrel{\iota}{\rightarrow} A_L\to H_0(G,\,A_L)/H_0(G,\,A_L)^{\Delta}\to 0$$
\end{lemma}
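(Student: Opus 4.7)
The plan is to identify the kernel and cokernel of $\iota$ separately, using the remark that $A_K=A_L^{\langle\sigma\rangle}$ and $A_{K'}=A_L^{\langle\tau^2\sigma\rangle}$ (which follows from the $2$-cohomological triviality of $\mathbb{Z}_p(m)$ over a group of order $2$), together with Lemma~\ref{esto}.

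For the kernel, a pair $(a,a')\in A_K\oplus A_{K'}$ lies in $\ker\iota$ iff $a'=a^{-1}$, in which case both $a$ and $a^{-1}$ lie in $A_K\cap A_{K'}=A_L^{\langle\sigma,\,\tau^2\sigma\rangle}$. Since $\sigma\cdot(\tau^2\sigma)=\tau^{-2}$ and $p$ is odd, $\tau^{-2}$ generates $G$, so $\langle\sigma,\tau^2\sigma\rangle=D$. Thus $A_K\cap A_{K'}=A_L^D=H^0(D,A_L)$, and the assignment $a\mapsto (a,a^{-1})$ yields the required injection.

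For the cokernel, Lemma~\ref{esto} gives $I_GA_L\subseteq A_KA_{K'}$, so $A_L/A_KA_{K'}$ is a quotient of $H_0(G,A_L)$. On the quotient $H_0(G,A_L)$ the $D$-action factors through $\Delta=D/G$, and $\sigma$ and $\tau^2\sigma$ act identically; hence both $A_K$ and $A_{K'}$ map into $H_0(G,A_L)^\Delta$. For the reverse inclusion I would use that $|\Delta|=2$ is invertible in $\mathbb{Z}_p$ (since $p$ is odd), so that the idempotent $e_+=(1+\sigma)/2$ is defined on the $\mathbb{Z}_p$-module $A_L$: given $\bar x\in H_0(G,A_L)^\Delta$ represented by $x\in A_L$, we have $(1-\sigma)x\in I_GA_L$, and then $e_+x\in A_L^{\langle\sigma\rangle}=A_K$ differs from $x$ by $(\sigma x-x)/2\in I_GA_L$, so $e_+x$ represents $\bar x$ in $H_0(G,A_L)$. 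This proves that already the image of $A_K$ in $H_0(G,A_L)$ surjects onto $H_0(G,A_L)^\Delta$; a fortiori the image of $A_KA_{K'}$ equals $H_0(G,A_L)^\Delta$, and the cokernel $A_L/A_KA_{K'}$ becomes $H_0(G,A_L)/H_0(G,A_L)^\Delta$.

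Assembling these computations of $\ker\iota$ and $\mathrm{coker}\,\iota$ gives the four-term exact sequence in the statement. The only mildly delicate point is the lifting argument in the last step, where it is essential that we are working $p$-locally with $p$ odd so that the idempotent $(1+\sigma)/2$ makes sense on $A_L$; once this is available, the descent data for $\langle\sigma\rangle$ directly produce a preimage in $A_K$ of any $\Delta$-fixed class, and no further arithmetic input is needed.
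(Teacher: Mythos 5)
Your proof is correct and follows essentially the same route as the paper: the kernel is identified with $A_K\cap A_{K'}=A_L^{D}$, and the cokernel computation rests on $I_GA_L\subseteq A_KA_{K'}$ (Lemma \ref{esto}) together with the observation that the image of $A_K$ in $H_0(G,\,A_L)$ is exactly $H_0(G,\,A_L)^{\Delta}$. Your explicit idempotent $(1+\sigma)/2$ is just the concrete form of the paper's appeal to the $\langle\sigma\rangle$-cohomological triviality of the $2$-divisible $\mathbb{Z}_p$-module $I_GA_L$, which yields $(A_L/I_GA_L)^{\langle\sigma\rangle}=A_KI_GA_L/I_GA_L$.
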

\begin{proof}
It is easy to see that the map $\mathrm{Ker}\,\iota\to H^{0}(D,\,A_L)$ given by $(a,\,a')\mapsto a$ is indeed an isomorphism.\\ 
As for the cokernel of $\iota$, note that $I_GA_L\subseteq A_KA_{K'}$ by Lemma \ref{esto}. Now the claim follows since 
$$A_{K'}I_GA_L/I_GA_L=(A_L/I_GA_L)^{\langle\tau^2\sigma\rangle}=(A_L/I_GA_L)^{\langle\sigma\rangle}=A_KI_GA_L/I_GA_L$$
Therefore
$$H_0(G,\,A_L)^{\Delta}=(A_L/I_GA_L)^{\Delta}=A_KA_{K'}I_GA_L/I_GA_L=A_KA_{K'}/I_GA_L$$
\end{proof}

\begin{lemma}
The following equality holds
$$|H^0(D,\,A_L)|=\frac{|H^2(D,\,U_L)|\cdot|A_k|}{|H^1(D,\,U_L)|}$$
\end{lemma}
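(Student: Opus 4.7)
The key observation is that the exact sequence of Proposition \ref{descent},
$$0\to H^1(G,U_L)\to A_F\to A_L^G\to H^2(G,U_L)\to 0,$$
is actually an exact sequence of $\Delta$-modules: since $G$ is normal in $D$, the quotient $\Delta=D/G$ acts naturally on each term, and the maps are functorial with respect to this action. I would therefore apply the functor $(-)^\Delta$ of $\Delta$-invariants to this sequence. Because $|\Delta|=2$ is coprime to $p$ and every term is a $\mathbb{Z}_p$-module (hence $\Delta$-cohomologically trivial), this functor is exact on our sequence.

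After taking $\Delta$-invariants, each term is identified as follows. First, $A_F^\Delta=A_k$ by the observation recorded in the excerpt that $A_E^Q=A_{E^Q}$ whenever $Q$ has order $2$, and similarly $(A_L^G)^\Delta=A_L^D=H^0(D,A_L)$. For the outer terms, I would invoke the Hochschild--Serre spectral sequence for $1\to G\to D\to \Delta\to 1$ with coefficients in $U_L$: since $U_L$ is a $\mathbb{Z}_p$-module and $|\Delta|=2$ is prime to $p$, the groups $H^i(\Delta,-)$ vanish in positive degree, the spectral sequence collapses, and one obtains canonical isomorphisms $H^j(D,U_L)\cong H^j(G,U_L)^\Delta$ for every $j\geq 0$.

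Combining these identifications yields the exact sequence of finite abelian groups
$$0\to H^1(D,U_L)\to A_k\to H^0(D,A_L)\to H^2(D,U_L)\to 0,$$
and the claimed formula is just the multiplicativity of orders in this sequence. The only delicate point is verifying the $\Delta$-equivariance of the descent sequence in Proposition \ref{descent}; this should follow from the functoriality of its construction (in \cite{KM}) with respect to the full $D$-action on $L$, so that the maps $A_F\to A_L^G$ and the connecting maps to $H^j(G,U_L)$ are automatically compatible with the action of $\Delta=D/G$.
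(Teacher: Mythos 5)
Your proposal is correct and follows essentially the same route as the paper: take $\Delta$-invariants of the descent sequence of Proposition \ref{descent} (exact because all terms are $2$-divisible, hence $\Delta$-cohomologically trivial), identify $A_F^\Delta\cong A_k$ and $(A_L^G)^\Delta=A_L^D$, and use the collapse of Hochschild--Serre to get $H^j(G,U_L)^\Delta\cong H^j(D,U_L)$. The paper's proof is exactly this, stated more tersely.
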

\begin{proof}
Take $\Delta$-invariants of the exact sequence in Proposition \ref{descent}: the sequence stays exact. Then use that $(A_L^G)^{\Delta}=A_L^{D}$, $A_F^\Delta\cong A_k$ and $H^j(G,U_L)^{\Delta}=H^j(D,\,U_L)$ ($j=1,\,2$) because $U_L^G=U_F$ is $\Delta$-cohomologically trivial (being $2$-divisible).
\end{proof} 

The next lemma describe codescent for $A_L$.

\begin{lemma}
The corestriction map induces isomorphisms $H_0(G,\,A_L)\cong A_F$ and $H_0(G,\,A_L)^{\Delta} \cong A_k$.
\end{lemma}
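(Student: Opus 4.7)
The plan is to show that corestriction $\mathrm{cor}:A_L\to A_F$, which is $G$-equivariant with trivial $G$-action on the target, factors through an isomorphism $\overline{\mathrm{cor}}:H_0(G,A_L)\to A_F$; the second isomorphism will then follow by taking $\Delta$-invariants of the first and using that $|\Delta|=2$ is coprime to $p$. The compatibility I will rely on throughout is the projection formula $\mathrm{res}\circ\overline{\mathrm{cor}}=N_G$ as maps $H_0(G,A_L)\to A_L^G$, coming from the standard identity $\pi^*\pi_*=\sum_{g\in G}g^*$ for the \'etale Galois cover $\pi:\mathrm{Spec}(\mathcal{O}_L^T[\frac{1}{p}])\to\mathrm{Spec}(\mathcal{O}_F^T[\frac{1}{p}])$.

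First I would match orders. Proposition \ref{descent} gives $|A_F|=|A_L^G|\cdot|H^1(G,U_L)|/|H^2(G,U_L)|$, and the Tate norm exact sequence for the finite $G$-module $A_L$, combined with the elementary identity $|M^G|=|M_G|$ for any finite module $M$ over a cyclic group (both equal $|\ker(g-1)|=|\mathrm{coker}(g-1)|$), yields $|H_0(G,A_L)|=|A_L^G|$. So $|H_0(G,A_L)|=|A_F|$ will follow as soon as the Herbrand quotient of $U_L$ as a $\mathbb{Z}_p[G]$-module equals $1$. Since $[L:F]=p$ is odd, every archimedean place of $F$ splits completely in $L$, hence $\mathrm{rk}_{\mathbb{Z}_p}(U_L)=p\cdot\mathrm{rk}_{\mathbb{Z}_p}(U_F)$; decomposing $U_L\otimes\mathbb{Q}_p$ into copies of the two irreducible $\mathbb{Q}_p[G]$-representations (the trivial one, whose multiplicity is $\mathrm{rk}_{\mathbb{Z}_p}(U_F)$ since $U_L^G=U_F$, and the faithful one of dimension $p-1$), the rank computation forces $U_L\otimes\mathbb{Q}_p$ to be $\mathbb{Q}_p[G]$-free, so the Herbrand quotient is indeed $1$.

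To promote the cardinality match to an isomorphism I would use that $H_{\acute{e}t}^i(\mathrm{Spec}(\mathcal{O}_F^T[\frac{1}{p}]),\mathbb{Z}_p(m))=0$ for $i\geq 3$ (this scheme has cohomological $p$-dimension at most $2$ because $p$ is odd and $T$ contains all archimedean and $p$-adic primes). In the Hochschild-Serre spectral sequence $E_2^{p,q}=H^p(G,H^q_{\acute{e}t}(\mathrm{Spec}(\mathcal{O}_L^T[\frac{1}{p}]),\mathbb{Z}_p(m)))\Rightarrow H^{p+q}$, the vanishings $H^3=0$ and $H^4=0$ force the differential $d_2:H^1(G,A_L)\to H^3(G,U_L)$ to be bijective, i.e. (by cyclic periodicity) an isomorphism $H^1(G,A_L)\cong H^1(G,U_L)$, which also yields $|H^2(G,A_L)|=|H^2(G,U_L)|$. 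The inclusion $N_GA_L\subseteq\mathrm{res}(A_F)$ in $A_L^G$ then becomes an equality of subgroups (same cardinality, one contained in the other), which translates into $\mathrm{Im}(\overline{\mathrm{cor}})+\ker(\mathrm{res})=A_F$. The subgroup $H^1(G,A_L)=\ker(N_G)/I_GA_L$ of $H_0(G,A_L)$ maps under $\overline{\mathrm{cor}}$ into $\ker(\mathrm{res})=H^1(G,U_L)\subseteq A_F$, and identifying this restriction with the edge map $d_2$ (shown bijective above) gives $H^1(G,U_L)\subseteq\mathrm{Im}(\overline{\mathrm{cor}})$, so $\overline{\mathrm{cor}}$ is surjective and hence an isomorphism by the cardinality count.

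For the second isomorphism, $\overline{\mathrm{cor}}$ is $D$-equivariant since corestriction is natural and $D=G\rtimes\Delta$; taking $\Delta$-invariants yields $H_0(G,A_L)^\Delta\cong A_F^\Delta$. Since $|\Delta|=2$ is coprime to $p$, $A_F$ is $\Delta$-cohomologically trivial, so descent for $F/k$ (Hochschild-Serre, which degenerates here) produces $A_k\cong A_F^\Delta$. The hardest step of this plan is the identification of the Hochschild-Serre differential $d_2$ with the restriction of $\overline{\mathrm{cor}}$ to $H^1(G,A_L)$: this is the one place where the argument uses more than a pure cardinality count. A cleaner alternative is to invoke the codescent theorem from \cite{KM} directly, which produces $\overline{\mathrm{cor}}$ as an isomorphism without going through the spectral-sequence vanishings at all.
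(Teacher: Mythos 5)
The paper's own proof of this lemma is a one-line citation: the first isomorphism is \cite{KM}, Proposition 1.3, and the second follows because corestriction commutes with conjugation, so the isomorphism is $\Delta$-equivariant and one takes $\Delta$-invariants using $A_F^{\Delta}=A_k$. Your closing sentence (``invoke the codescent theorem from \cite{KM} directly'') is therefore exactly the intended argument, and your treatment of the second isomorphism matches the paper's. The self-contained route you sketch instead is more ambitious, and most of it is sound: the identity $|H_0(G,A_L)|=|A_L^G|$ for the cyclic group $G$, the count $|A_F|=|A_L^G|\cdot|H^1(G,U_L)|/|H^2(G,U_L)|$ from Proposition \ref{descent}, the vanishing of the Herbrand quotient of $U_L$ (via complete splitting of the archimedean places in $L/F$ and the resulting $\mathbb{Q}_p[G]$-freeness of $U_L\otimes\mathbb{Q}_p$), and the spectral-sequence bookkeeping showing that $d_2\colon H^i(G,A_L)\to H^{i+2}(G,U_L)$ is bijective for $i\geq 1$ are all correct.

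The gap is the step you yourself flag: the identification of the restriction of $\overline{\mathrm{cor}}$ to $\widehat{H}^{-1}(G,A_L)=\ker(N_G)/I_GA_L$ with the differential $d_2$ (through the periodicity isomorphisms). Everything reduces to the injectivity of this one map --- indeed $\ker(\overline{\mathrm{cor}})\subseteq\widehat{H}^{-1}(G,A_L)$ because $\mathrm{res}\circ\overline{\mathrm{cor}}=N_G$, and your cardinality count shows that source and target have the same order, so injectivity, surjectivity and the asserted identification are all equivalent here. But that compatibility is not a standard citable fact in the form you state it: it amounts to evaluating the corestriction in negative Tate degrees against the $2$-extension class that produces the Hochschild--Serre $d_2$, which is essentially the content of the codescent theorem itself. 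As written, the ``self-contained'' argument defers its only hard point to an unproven statement of comparable depth to \cite{KM}, Proposition 1.3. Either supply a proof of that compatibility or simply cite \cite{KM} as the paper does; the remainder of your write-up (in particular the deduction $H_0(G,A_L)^{\Delta}\cong A_F^{\Delta}=A_k$ from $\Delta$-equivariance of the corestriction and the $\Delta$-cohomological triviality of $\mathbb{Z}_p$-modules for the odd prime $p$) agrees with the paper and is fine.
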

\begin{proof}
For the first isomorphism use \cite{KM}, Proposition 1.3. Then note that the second isomorphism follows from the first one being $\Delta$-equivariant (since corestriction commutes with conjugation).
\end{proof} 

In what follows we shall rewrite the orders of $H^1(D,\,U_L)$ and $H^2(D,\,U_L)$ in terms of certain unit indexes. We first quote a simple lemma which has been used already by Lemmermeyer (see \cite{Lem}, Section 5).

\begin{lemma}\label{lem}
Let $f:B\to B'$ be a homomorphism of abelian groups and let $C$ be a subgroup of finite index in $B$. Then
$$(B:C)=(f(B):f(C))\cdot(\mathrm{Ker} f:\mathrm{Ker} f\cap C)$$
\end{lemma}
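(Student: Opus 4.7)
The plan is to realize the three indices as the orders of terms in a short exact sequence of finite abelian groups. Since $f(C)\subseteq f(B)$, the map $f$ descends to a well-defined homomorphism $\varphi:B/C\to f(B)/f(C)$ sending $b+C$ to $f(b)+f(C)$, and this $\varphi$ is manifestly surjective. The kernel of $\varphi$ consists of those cosets $b+C$ for which $f(b)\in f(C)$, which is equivalent to $b\in \ker f + C$. Hence $\ker\varphi=(\ker f+C)/C$, and by the second isomorphism theorem this is canonically isomorphic to $\ker f/(\ker f\cap C)$.

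Assembling these observations yields the short exact sequence
$$0\longrightarrow \ker f/(\ker f\cap C)\longrightarrow B/C\stackrel{\varphi}{\longrightarrow} f(B)/f(C)\longrightarrow 0.$$
Since $(B:C)$ is finite by hypothesis, both outer terms are automatically finite as well, and multiplicativity of cardinalities in a short exact sequence gives the required identity $(B:C)=(f(B):f(C))\cdot(\ker f:\ker f\cap C)$.

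There is essentially no obstacle here: the argument is a direct application of the first and second isomorphism theorems and is entirely independent of the arithmetic setting of the paper. The only point to verify is that all three indices are simultaneously finite, which is immediate from $(B:C)<\infty$ since $\varphi$ is surjective onto $f(B)/f(C)$ and $\ker f/(\ker f\cap C)$ embeds into $B/C$.
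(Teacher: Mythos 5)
Your proof is correct and follows essentially the same route as the paper, which also exhibits the short exact sequence $0\to (C+\mathrm{Ker}f)/C\to B/C\to f(B)/f(C)\to 0$ and reads off the index identity; you merely make explicit the second-isomorphism-theorem identification $(\mathrm{Ker}f+C)/C\cong \mathrm{Ker}f/(\mathrm{Ker}f\cap C)$ and the finiteness check, which the paper leaves implicit.
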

\begin{proof}
This is clear because we have the exact sequence
$$0\to (C+\mathrm{Ker}f)/C\to B/C\to f(B)/f(C)\to 0$$
\end{proof}

\rmk The preceding lemma implies in particular the following equality ($B=B'=U_L$, $f=N_G$)
$$(U_L:U_KU_{K'}U_F)=(N_GU_L:N_G(U_KU_{K'}U_F))\cdot(U_L[N_G]:U_L[N_G]\cap U_KU_{K'}U_F)$$
Note that $U_KU_{K'}U_F$ is of finite index in $U_L$ because, for example, $N_G(U_KU_{K'}U_F)$ is of finite index in $N_GU_L$ (since both are of finite index in $U_F=U_L^G$) and $U_L[N_G]\cap U_KU_{K'}U_F$, which contains $I_GU_L$ by Lemma \ref{esto}, is of finite index in $U_L[N_G]$.\\

Recall (see Section \ref{intro}) that $M[p]$ is the submodule of the $\mathbb{Z}_p$-module $M$ which is killed by $p$.

\begin{lemma}\label{qhd}
We have 
$$|H^1(D,\,U_L)|=|U_L[N_G]/\left(U_L[N_G]\cap U_KU_{K'}U_F\right)|\cdot |I_GU_L\cdot U_F[p]/I_GU_L\cdot U_k[p]|$$ 
and 
$$|H^2(D,\,U_L)|=\frac{|U_F/U_F^p|}{|U_k/U_k^p|\cdot|N_GU_L/N_G(U_KU_{K'}U_F)|}$$ 
\end{lemma}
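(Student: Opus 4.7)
My plan is to reduce everything to cyclic-group Tate cohomology plus $\Delta$-invariants via Hochschild--Serre, and then to exploit the $\Delta$-decomposition of the various subgroups of $U_L$ and $U_F$. Since $p$ is odd, $|\Delta|=2$ is invertible in $\mathbb{Z}_p$, so $H^i(\Delta,-)=0$ for $i\geq 1$ on any $\mathbb{Z}_p[\Delta]$-module. The Hochschild--Serre spectral sequence for $1\to G\to D\to\Delta\to 1$ therefore degenerates to $H^n(D,U_L)\cong H^n(G,U_L)^\Delta$. For the cyclic group $G$ of prime order $p$, standard Tate cohomology gives $H^1(G,U_L)=U_L[N_G]/I_GU_L$ and $H^2(G,U_L)=U_F/N_GU_L$. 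Since $\Delta$-invariants is exact on $\mathbb{Z}_p[\Delta]$-modules, every such module decomposes as a direct sum of its $\pm 1$-eigenspaces under $\sigma$, which I will denote $(-)^\pm$.

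For the $H^2$ formula, applying $\Delta$-invariants to $0\to U_L[N_G]\to U_L\to N_GU_L\to 0$ gives $(N_GU_L)^\Delta=N_GU_K$. By Lemma \ref{nuccio} applied with $j=0$, the $+$ and $-$ eigenspaces of $H^2(G,U_L)$ have the same order, so equivalently $|H^2(D,U_L)|=|H^2(G,U_L)^-|=[U_F^-:(N_GU_L)^-]$. Two key facts then enter: $N_GU_{K'}=N_GU_K$, because $K'=\tau K$ and $N_G$ is $G$-equivariant, and $N_G$ acts as multiplication by $p$ on $U_F$, so $U_k^p\subseteq N_GU_K$. Together they give $N_G(U_KU_{K'}U_F)=N_GU_K\cdot U_F^p$, whose $+$-part equals $N_GU_K=(N_GU_L)^+$. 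Hence $[N_GU_L:N_G(U_KU_{K'}U_F)]=[(N_GU_L)^-:(U_F^-)^p]$, and combining with $|U_F^-/(U_F^-)^p|=|U_F/U_F^p|/|U_k/U_k^p|$ (from $U_F=U_k\oplus U_F^-$) yields the claim after rearrangement.

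For the $H^1$ formula, a subtlety intervenes. Analyzing the $\Delta$-action on $H^1(G,U_L)=U_L[N_G]/I_GU_L$ through $1$-cocycles and the relation $\sigma\tau\sigma=\tau^{-1}$ shows that on the Tate model the $\sigma$-action is the \emph{negative} of the one inherited from $U_L$. Consequently $H^1(D,U_L)$ corresponds to the $-1$-eigenspace of the inherited action, giving $|H^1(D,U_L)|=[(U_L[N_G])^-:(I_GU_L)^-]$. I would factor this index through $U_L[N_G]\cap U_KU_{K'}U_F$. The first factor equals $A$: both $U_L[N_G]$ and $U_L[N_G]\cap U_KU_{K'}U_F$ have the same $+$-part $U_K[N_G]$ (since $U_K\subseteq U_KU_{K'}U_F$), so the $+$-part of $A$ is trivial and $A=[(U_L[N_G])^-:(U_L[N_G]\cap U_KU_{K'}U_F)^-]$. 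The second factor equals $B$ via the identification
\[
(U_L[N_G]\cap U_KU_{K'}U_F)^-=(I_GU_L)^-+U_F[p]^-,
\]
together with the parallel observation that $B$ has trivial $+$-part (so $B=|U_F[p]^-/(U_F[p]^-\cap I_GU_L)|$). The inclusion $\supseteq$ of the identification is immediate from $I_GU_L\subseteq U_KU_{K'}$ (the $U$-analog of Lemma \ref{esto}) together with $U_F[p]=U_L[N_G]\cap U_F$ (since $N_G|_{U_F}=p$).

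The main obstacle is the reverse inclusion in the identification above: given $x\in(U_L[N_G]\cap U_KU_{K'}U_F)^-$, I would write $x=a+a'+b$ with $a\in U_K$, $a'\in U_{K'}$, $b\in U_F$ and use the two constraints $\sigma x=-x$ and $N_Gx=0$ to reduce $x$ modulo $(I_GU_L)^-$ to a $p$-torsion element in $U_F^-$. This is a technical algebraic manipulation relying on the fine structure of $U_K$, $U_{K'}$, and $U_F$ as $D$-submodules of $U_L$. Verifying the sign twist on the $\Delta$-action on $H^1(G,U_L)$ via $1$-cocycles, though standard, is a subtler preliminary which reverses the role of $+$ and $-$ eigenspaces compared to the $H^2$ case, and is essential for recognizing that the first factor arises from $A$ (a purely $-$-part quantity) rather than from a $+$-part invariant.
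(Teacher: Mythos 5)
Your reduction to eigenspaces is a legitimate repackaging of the paper's argument: since $2$ is invertible in $\mathbb{Z}_p$, every $\sigma$-stable subquotient splits into $\pm$-eigenspaces, and the $\Delta$-antiequivariance of the Tate periodicity isomorphism (Lemma \ref{nuccio}) identifies $H^1(D,U_L)$ and $H^2(D,U_L)$ with the \emph{minus} parts of $U_L[N_G]/I_GU_L$ and $U_F/N_GU_L$ respectively. Your derivation of the $H^2$ formula along these lines is complete and correct, and is if anything slightly cleaner than the paper's chain of indices (which instead invokes $U_k\cap U_F^p=U_k^p$ explicitly and Lemma \ref{nuccio} at the end); the parenthetical claim that Lemma \ref{nuccio} makes the $+$ and $-$ eigenspaces of $H^2(G,U_L)$ have equal order is not what that lemma says, but the conclusion you actually use, $|H^2(D,U_L)|=|(U_F/N_GU_L)^-|$, is right. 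The reduction of the first formula to the single identity
$$\left(U_L[N_G]\cap U_KU_{K'}U_F\right)^- \;=\; (I_GU_L)^-\cdot U_F[p]^-$$
is also correct as bookkeeping (both your factors $A$ and $B$ are genuinely concentrated in the minus part, since $U_KU_{K'}$ is $\sigma$-stable by the $U$-version of Lemma \ref{esto}).

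The problem is that this identity is exactly where the entire arithmetic content of the lemma sits, and you have not proved it: you state the easy inclusion $\supseteq$ and defer the inclusion $\subseteq$ as ``a technical algebraic manipulation.'' It is not routine. Note that the corresponding statement \emph{without} passing to minus parts is false: the paper shows that $\bigl(U_L[N_G]\cap U_KU_{K'}U_F : I_GU_L\cdot U_F[p]\bigr)$ equals $|\widehat{H}^{-1}(D,U_L)|/(I_GU_L\cdot U_k[p]:I_GU_L)$, which is nontrivial in general, so your identity survives only because that whole index is concentrated in the plus part --- a fact equivalent to the lemma itself. The paper's proof of this step occupies most of its argument: it introduces the map $\overline{N}:U_L[N_D]\to \bigl(U_L[N_G]\cap U_KU_{K'}U_F\bigr)/I_GU_L\cdot U_F[p]$ induced by $1+\sigma$, proves its surjectivity by writing $u=t^{1+\sigma}$, $u'=(t')^{1+\tau^2\sigma}$ (using $2$-divisibility), deducing $v^p\in U_k\cap U_F^p=U_k^p$ from $N_G(uu'v)=1$, and exhibiting the preimage $tt'w^{1/2}$; it then computes $\mathrm{Ker}\,\overline{N}$, which requires showing $\{u\in U_L\,:\,u^{1+\sigma}\in I_GU_L\}=I_DU_L$ and identifying $(I_GU_L\cdot U_F[p]/I_GU_L)^{\Delta}$ with $I_GU_L\cdot U_k[p]/I_GU_L$. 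Your sketch (``write $x=a+a'+b$ and use $\sigma x=-x$, $N_Gx=0$'') would have to reproduce essentially all of this --- in particular the use of $U_k\cap U_F^p=U_k^p$ and the half-division trick --- so as it stands the proof of the first formula is incomplete.
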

\begin{proof} 
We prove the first assertion. The norm map 
$$U_L\stackrel{1+\sigma}{\longrightarrow} N_{\langle\sigma\rangle} U_L=U_L^{\langle\sigma\rangle}=U_K$$ 
gives a map $U_{L}[N_D]\to U_L[N_G]\cap U_KU_{K'}U_F$. We consider the induced map $$\overline{N}:U_{L}[N_D]\to \left(U_L[N_G]\cap U_KU_{K'}U_F\right)/I_GU_L\cdot U_F[p]$$ 
Note that indeed $U_F[p]\subseteq U_L[N_G]$ since $N_G$ is raising to the $p$-th power on $U_F$ and $I_GU_L\subseteq U_KU_{K'}$ (see Lemma \ref{esto}).
Then we claim that the sequence 
$$0\to \mathrm{Ker}\overline{N}/I_GU_L \to U_L[N_D]/I_GU_L\stackrel{\overline{N}}{\rightarrow} \left(U_L[N_G]\cap U_KU_{K'}U_F\right)/I_GU_L\cdot U_F[p]\to0$$
is exact. Note that indeed $I_GU_L\subseteq \mathrm{Ker}\overline{N}$ since for any $u\in U_L$
$$u^{(1-\tau)(1+\sigma)}=u^{(1-\tau)}\cdot u^{\sigma(1-\tau^{-1})} \in I_GU_L$$
The only nontrivial thing to prove is the surjectivity of $\overline{N}$: take $u\in U_K$, $u'\in U_{K'}$ and $v\in U_F$ such that $N_G(uu'v)=1$. We can find $t,\,t'\in U_L$ such that $t^{1+\sigma}=u$ and $(t')^{1+\tau^2\sigma}=u'$. Then 
\begin{equation}\label{condi}
1=N_G(uu'v)=N_{D}(tt')N_G(v)=N_{D}(tt')v^p
\end{equation}
In particular $v^p\in U_L^D=U_k$. Note that 
\begin{equation}\label{ppi}
U_k\cap U_F^{p}=U_k^p
\end{equation} 
(the surjective map $U_F\stackrel{p}{\rightarrow} U_F^p$ stays surjective after taking $\Delta$-invariants). Hence there exists $w\in U_k$ such that $v^p=w^p$, which implies $v=wv_0$ for some $v_0\in U_F[p]$. Therefore 
$$\overline{N}(tt'w^{\frac{1}{2}})=uu'v\,\,\,\,\mathrm{mod}\,\,\,I_GU_L\cdot U_F[p]$$ 
and the fact that $tt'w^{\frac{1}{2}}\in U_{L}[N_D]$ is exactly (\ref{condi}). This proves that the above short sequence is exact. Now note that $I_DU_L\subseteq \mathrm{Ker}\overline{N}$ (since $I_GU_L\subseteq \mathrm{Ker}\overline{N}$ and $(1+\sigma)(1-\sigma)=0$) and we have an exact sequence 
\begin{equation}\label{ues}
0\to I_DU_L/I_GU_L\to \mathrm{Ker}\overline{N}/I_GU_L \stackrel{1+\sigma}{\rightarrow} \left(I_GU_L\cdot U_F[p]/I_GU_L\right)^{\Delta}\to 0
\end{equation}
Surjectivity is clear since 
$\left(I_GU_L\cdot U_F[p]/I_GU_L\right)^{\Delta}=N_{\Delta}(I_GU_L\cdot U_F[p]/I_GU_L)$ and not only $U_F[p]\subseteq U_L[N_D]$ but actually $U_F[p]\subseteq \mathrm{Ker}\overline{N}$. To describe the kernel, set $Y=\{u\in U_L\,|\,u^{1+\sigma}\in I_GU_L\}$ and note that $I_GU_L\subseteq Y$ and $Y/I_G U_L$ equals the kernel of (\ref{ues}). 
Now if $u\in T$, then there exists $v\in U_L$ such that
$$u^{1+\sigma}=v^{1-\tau}$$
and in particular 
$$u^2=v^{1-\tau}u^{1-\sigma}\in I_DU_L$$
Then $u\in I_DU_L$ and since clearly $I_DU_L\subseteq Y$, we have in fact the equality.
%Then consider the following exact sequence
%$$0\to I_{\langle\sigma\rangle}U_L\to A \stackrel{1+\sigma}{\longrightarrow} I_G U_L\cap U_K\to 0$$ 
%This gives rise to the exact sequence
%$$0\to I_DU_L/I_G U_L\to A/I_GU_L \stackrel{1+\sigma}{\longrightarrow} I_G U_L\cap U_K/N_{\langle\sigma\rangle} (I_G U_L)\to 0$$
%since $I_{\langle\sigma\rangle}U_L\cdot I_G UL=I_DU_L$. Now $N_{\langle\sigma\rangle} (I_G U_L)=(I_G U_L)^{\langle\sigma\rangle}=I_G U_L\cap U_K$ and therefore the kernel of (\ref{ues}) is $I_DU_L/I_G U_L$. 
Moreover 
$$\left(I_GU_L\cdot U_F[p]/I_GU_L\right)^{\Delta}\cong\left(U_F[p]/I_GU_L\cap U_F[p]\right)^{\Delta}=$$
$$=U_k[p]/I_GU_L\cap U_k[p]\cong I_GU_L\cdot U_k[p]/I_GU_L$$ 
Then 
$$(U_L[N_G]:U_L[N_G]\cap U_KU_{K'}U_F)=\frac{(U_L[N_G]:I_GU_L\cdot U_F[p])}{(U_L[N_G]\cap U_KU_{K'}U_F:I_GU_L\cdot U_F[p])}=$$ 
$$=\frac{(U_L[N_G]:I_GU_L)}{(I_GU_L\cdot U_F[p]:I_GU_L)\cdot(U_L[N_D]:\mathrm{Ker}\overline{N})}=$$
$$=\frac{|\widehat{H}^{-1}(G,\,U_L)|(\mathrm{Ker}\overline{N}:I_DU_L)}{(I_GU_L\cdot U_F[p]:I_GU_L)\cdot(U_L[N_D]:I_DU_L)}=$$
$$=\frac{|\widehat{H}^{-1}(G,\,U_L)|}{|\widehat{H}^{-1}(D,\,U_L)|}\cdot\frac{(I_GU_L\cdot U_k[p]:I_GU_L)}{(I_GU_L\cdot U_F[p]:I_GU_L)}=$$
$$=\frac{|H^{1}(D,\,U_L)|}{(I_GU_L\cdot U_F[p]:I_GU_L\cdot U_k[p])}$$
by Lemma \ref{nuccio}. Hence the first assertion of the lemma is proved.\\
We now prove the second assertion (this part is actually the same as in the proof of \cite{Lem}, Theorem 2.2). Note that $N_GU_K=N_DU_L=N_GU_{K'}$: in particular $N_GU_K\subseteq U_L^D=U_k$. Therefore
$$(N_GU_L:N_G(U_KU_{K'}U_F))=(N_GU_L:U_F^p\cdot N_GU_K)=$$
$$=\frac{(U_F:U_F^p\cdot N_GU_K)}{(U_F:N_GU_L)}=\frac{(U_F:U_F^p)}{(U_F^pN_GU_K:U_F^p)\cdot(U_F:N_GU_L)}=$$
$$=\frac{(U_F:U_F^p)\cdot(U_F^pU_k:U_F^pN_GU_K)}{(U_F^pU_k:U_F^p)\cdot(U_F:N_GU_L)}$$
Now
$$(U_F^pU_k:U_F^pN_GU_K)=\frac{(U_F^pU_k:U_F^p)}{(U_F^pN_GU_K:U_F^p)}=$$
$$=\frac{(U_k:U_F^p\cap U_k)}{(N_GU_K:U_F^p\cap N_GU_K)}=\frac{(U_k:U_k^p)}{(N_GU_K:U_k^p)}=(U_k:N_GU_K)$$
using (\ref{ppi}) and
$$U_k^p=N_GU_k\subseteq U_F^p\cap N_GU_K=U_F^p\cap N_DU_L\subseteq U_F^p\cap U_k=U_k^p$$
%On the other hand $(U_F^pU_k:U_F^p)=(U_k:U_k\cap U_F^p)=(U_k:U_k^p)$. 
Therefore, using once more (\ref{ppi}),
$$(N_GU_L:N_G(U_KU_{K'}U_F))=\frac{(U_F:U_F^p)(U_k:N_GU_K)}{(U_k:U_k^p)(U_F:N_GU_L)}=$$
$$=\frac{|\widehat{H}^0(D,U_L)|}{|\widehat{H}^0(G,U_L)|}\frac{(U_F:U_F^p)}{(U_k:U_k^p)}=\frac{(U_F:U_F^p)}{(U_k:U_k^p)|H^2(D,\,U_L)|}$$
by Lemma \ref{nuccio}.
\end{proof}

Recall (see Section \ref{intro}) that $\overline{M}$ is our notation for the torsion-free quotient of the $\mathbb{Z}_p$-module $M$. 

\begin{lemma}\label{ultimo}
We have
$$I_GU_L\cap U_F[p]=I_GU_L\cap U_F\quad\textrm{and}\quad I_GU_L\cap U_k[p]=I_GU_L\cap U_k$$ 
Furthermore, there is an isomorphism 
$$I_GU_L\cap U_F\cong \overline{U}_L^G/\overline{U}_F$$
which is $\Delta$-antiequivariant. In particular it induces an isomorphism
$$I_GU_L\cap U_F/I_GU_L\cap U_k\cong \overline{U}_L^D/\overline{U}_F^\Delta=\overline{U}_L^D/\overline{U}_k$$
\end{lemma}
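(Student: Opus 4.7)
The first two equalities come from a direct norm computation: any $v\in I_GU_L$ satisfies $N_Gv=1$ (since $N_G(1-\tau)=0$ in $\mathbb{Z}_p[G]$), while $v\in U_F=U_L^G$ forces $N_Gv=v^p$; combining these gives $v^p=1$, hence $v\in U_F[p]$. The argument with $U_k$ in place of $U_F$ is identical.

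The decisive input for the main isomorphism is Lemma \ref{torsion}: it gives $|\mathrm{tor}_{\mathbb{Z}_p}U_L|=|\mathrm{tor}_{\mathbb{Z}_p}U_F|$, and since $\mathrm{tor}_{\mathbb{Z}_p}U_F\subseteq\mathrm{tor}_{\mathbb{Z}_p}U_L$, these two subgroups coincide. In particular $G$ acts trivially on $\mathrm{tor}_{\mathbb{Z}_p}U_L$, so $I_G\cdot\mathrm{tor}_{\mathbb{Z}_p}U_L=1$ and $(\mathrm{tor}_{\mathbb{Z}_p}U_L)_G=\mathrm{tor}_{\mathbb{Z}_p}U_L$. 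Now apply the snake lemma to the commutative diagram whose rows are $1\to\mathrm{tor}_{\mathbb{Z}_p}U_L\to U_L\to\overline{U}_L\to 1$ and whose vertical maps are all equal to $1-\tau$: the kernels of these vertical maps are $\mathrm{tor}_{\mathbb{Z}_p}U_F$, $U_F$ and $\overline{U}_L^G$. The resulting long exact sequence produces a connecting map $\partial\colon\overline{U}_L^G\to\mathrm{tor}_{\mathbb{Z}_p}U_L$, explicitly $\tilde u\mapsto u^{1-\tau}$ for any lift $u\in U_L$, with $\ker\partial$ equal to the image $\overline{U}_F$ of $U_F$ in $\overline{U}_L$ and $\mathrm{im}\,\partial=I_GU_L\cap\mathrm{tor}_{\mathbb{Z}_p}U_L$. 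Using $\mathrm{tor}_{\mathbb{Z}_p}U_L=\mathrm{tor}_{\mathbb{Z}_p}U_F\subseteq U_F$ together with the first equality, this image coincides with $I_GU_L\cap U_F$, so $\partial$ descends to the claimed isomorphism $\overline{U}_L^G/\overline{U}_F\cong I_GU_L\cap U_F$.

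The $\Delta$-antiequivariance is forced by the relation $\sigma(1-\tau)=(1-\tau^{-1})\sigma=-\tau^{-1}(1-\tau)\sigma$ in $\mathbb{Z}_p[D]$, together with the triviality of $\tau$ on $\mathrm{tor}_{\mathbb{Z}_p}U_L$: a brief computation on representatives gives $\partial(\sigma\tilde u)=-\sigma\partial(\tilde u)$. For the final assertion, since $|\Delta|=2$ is coprime to the odd prime $p$, every $\mathbb{Z}_p[\Delta]$-module $M$ splits as $M^+\oplus M^-$ and has $H^i(\Delta,M)=0$ for $i\geq 1$. The latter vanishing, applied to the short exact sequences $1\to\mathrm{tor}_{\mathbb{Z}_p}U_F\to U_F\to\overline{U}_F\to 1$ and $1\to\overline{U}_F\to\overline{U}_L^G\to\overline{U}_L^G/\overline{U}_F\to 1$, yields $\overline{U}_F^\Delta=\overline{U}_k$, $(\overline{U}_L^G)^\Delta=\overline{U}_L^D$ and $(\overline{U}_L^G/\overline{U}_F)^\Delta=\overline{U}_L^D/\overline{U}_k$. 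Since $(I_GU_L\cap U_F)^\Delta=I_GU_L\cap U_k$ is immediate, and the antiequivariant isomorphism interchanges $+$- and $-$-eigenspaces, restricting to $-$-eigenspaces yields $(I_GU_L\cap U_F)/(I_GU_L\cap U_k)=(I_GU_L\cap U_F)^-\cong(\overline{U}_L^G/\overline{U}_F)^+=\overline{U}_L^D/\overline{U}_k$, as required.

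The main obstacle is identifying $\mathrm{im}\,\partial$ with $I_GU_L\cap U_F$: without the coincidence $\mathrm{tor}_{\mathbb{Z}_p}U_L=\mathrm{tor}_{\mathbb{Z}_p}U_F$ supplied by Lemma \ref{torsion}, one would get only $\mathrm{im}\,\partial=(I_GU_L\cap\mathrm{tor}_{\mathbb{Z}_p}U_L)/I_G\mathrm{tor}_{\mathbb{Z}_p}U_L$, a group with no clean interpretation inside $U_F$.
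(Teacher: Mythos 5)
Your proof is correct and follows essentially the same route as the paper's: the isomorphism you obtain from the snake-lemma connecting map is exactly the inverse of the map $\phi(u^{1-\tau})=\overline{u} \bmod \overline{U}_F$ that the paper constructs and verifies by hand, and both arguments hinge on the same inputs --- $\mathrm{tor}(U_L)=\mathrm{tor}(U_F)$ from Lemma \ref{torsion}, the identity $\sigma(1-\tau)=(1-\tau^{-1})\sigma$ for the antiequivariance, and $2$-divisibility for the passage to $\Delta$-(anti-)invariants. The snake-lemma packaging and the eigenspace formulation of the last step are slightly tidier than the paper's direct verifications, but the mathematical content is identical.
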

\begin{proof}
The first assertion follows from the fact that $G$ acts trivially on $U_F$ and $U_k$ and therefore
$$I_GU_L\cap U_F\subseteq U_L[N_G]\cap U_F\subseteq U_F[p]\quad \textrm{and}\quad I_GU_L\cap U_k\subseteq U_L[N_G]\cap U_k\subseteq U_k[p]$$
Now consider the map 
$$\phi:I_GU_L\cap U_F \to \overline{U}_L^G/\overline{U}_F$$
defined by $\phi(u^{1-\tau})=\overline{u} \,\,\mathrm{mod}\,\,\overline{U}_F$ ($\overline{u}$ is the class of $u$ in $\overline{U}_L$). First of all, this definition does not depend on the choice of $u$: namely, if $v^{1-\tau}=u^{1-\tau}$, then $vu^{-1}\in U_L^G=U_F$. Of course, the image of $\phi$ is contained in $(\overline{U}_L/\overline{U}_F)^G=\overline{U}_L^G/\overline{U}_F$ (this last equality comes from $H^1(G,\,\overline{U}_F)=\mathrm{Hom}(G,\,\overline{U}_F)=0$ since $\overline{U}_F$ is a free $\mathbb{Z}_p$-module with trivial $G$-action). Moreover $\phi$ is clearly a homomorphism. To see that it is injective, suppose that $\phi(u^{1-\tau})=\overline{1}\,\,\mathrm{mod} \,\,\overline{U}_F$. This means that there exist $\zeta\in \mathrm{tor}(U_L)$ and $v\in U_F$ such that $u=v\zeta$. But since $\mathrm{tor}(U_L)=\mathrm{tor}(U_F)$ by Lemma \ref{torsion}, this implies $u^{1-\tau}=1$. Hence $\phi$ is injective. To prove surjectivity, choose an element $\overline{u}\in \overline{U}_L^{G}$. This means $u^\tau=u\xi$ for some $\xi\in\mathrm{tor}(U_L)$. Then $u^{1-\tau}\in \mathrm{tor}(U_L)=\mathrm{tor}(U_F)\subseteq U_F$ and $\phi(u^{1-\tau})=\overline{u}\,\,\mathrm{mod}\,\,\overline{U}_F$.\\
The map $\phi$ is $\Delta$-antiequivariant, in other words, if $\delta$ generates $\Delta$, we have 
\begin{equation}\label{aeq}
\phi( (u^{1-\tau})^\delta)=\phi(u^{1-\tau})^{-\delta}
\end{equation} 
for any $u^{1-\tau}\in I_GU_L\cap U_F$. In fact, %first of all $u^{1-\tau}=u^{(1-\tau)\tau}=u^{\tau(1-\tau)}$ since $u^{1-\tau}\in U_F$. In particular 
%\begin{equation}\label{resi}
%\phi(u^{1-\tau})=\overline{u}^\tau\,\,\mathrm{mod}\,\,\overline{U}_F 
%\end{equation}
%and
$$(u^{1-\tau})^{\delta}=u^{\sigma(1-\tau)}=u^{(1-\tau^{-1})\sigma}=u^{(1-\tau)(\sum_{i=0}^{p-2}\tau^{i})\sigma}$$
Therefore
$$\phi((u^{1-\tau})^\delta)=\overline{u^{(\sum_{i=0}^{p-2}\tau^{i})\sigma}}\quad \mathrm{mod}\,\,\, \overline{U}_F$$
Hence to verify (\ref{aeq}), we have to check that 
$$v:=u^{(\sum_{i=0}^{p-2}\tau^{i})\sigma+\sigma}\in U_F=U_L^G$$
Now
$$v^{1-\tau^{-1}}=u^{(1-\tau^{-1})(\sum_{i=0}^{p-2}\tau^{i})\sigma+(1-\tau^{-1})\sigma}=(u^{1-\tau})^{\sigma(\sum_{i=0}^{p-2}\tau^{-i})+\sigma}=$$
$$=(u^{1-\tau})^{(p-1)\sigma+\sigma}=(u^{1-\tau})^{p\sigma}=1$$
since $u^{1-\tau}\in U_F\cap I_GU_L$ which means that $\tau$ acts trivially on it and it has order $p$. This proves that $\phi$ is $\Delta$-antiequivariant.
To get the last claim of the proposition note that 
$$(I_GU_L\cap U_F)^\Delta=I_GU_L\cap U_F\cap U_k=I_GU_L\cap U_k$$
and, $2$-divisible modules being $\Delta$-cohomologically trivial, 
$$(\overline{U}_L^G/\overline{U}_F)^\Delta=\overline{U}_L^D/\overline{U}_F^\Delta,\quad \overline{U}_F^\Delta=U_k$$  
Therefore $\phi$ induces an isomorphism
$$I_GU_L\cap U_F/I_GU_L\cap U_k\cong \overline{U}_L^D/\overline{U}_k$$
\end{proof}

We now state and prove a lemma which will allow us to get results for finite sets which are more general than our fixed $T$. 

\begin{lemma}\label{trivialrelations}
Let $S$ be any subset of $T$ which is stable under the action of $D$ and let $S'$ be the union of $S$ with the set of primes above $p$ in $L$. Then, for any subfield $E$ of $L$ containing $k$, 
$$H_{\acute{e}t}^{1}(\mathcal{O}^S_E\textstyle{[\frac{1}{p}]},\,\mathbb{Z}_p(m))\cong H_{\acute{e}t}^{1}(\mathcal{O}^T_E,\,\mathbb{Z}_p(m))\cong H_{\acute{e}t}^{1}(E,\,\mathbb{Z}_p(m))$$
and there is an exact sequence
$$0\to H_{\acute{e}t}^{2}(\mathcal{O}^S_E\textstyle{[\frac{1}{p}]},\,\mathbb{Z}_p(m))\to H_{\acute{e}t}^{2}(\mathcal{O}^T_E,\,\mathbb{Z}_p(m))\to \displaystyle{\bigoplus_{w\in (T\smallsetminus S')_{E}}H_{\acute{e}t}^{1}(k_v,\,\mathbb{Z}_p(m-1))}\to 0$$
where $k_w$ is the residue field of $E$ at $w$. Moreover the function 
\begin{equation}\label{vr}
H\mapsto \prod_{w\in (T\smallsetminus S')_{L^H}}|H_{\acute{e}t}^{1}(k_v,\,\mathbb{Z}_p(m))|
\end{equation} 
which is defined on the set of subgroups of $D$, is trivial on $D$-relations (in the sense of \cite{DD}, Section 2.iii).
\end{lemma}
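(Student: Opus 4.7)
The strategy for the first two claims is to invoke the Gysin localization sequence in \'etale cohomology for the open immersion $\mathrm{Spec}(\mathcal{O}^T_E)\hookrightarrow \mathrm{Spec}(\mathcal{O}^S_E[\frac{1}{p}])$, whose closed complement $Z$ is the zero-dimensional scheme of residue points at the primes in $(T\smallsetminus S')_E$; this complement actually lies inside the $p$-inverted scheme precisely because $S'$ contains every prime above $p$. The key local input is at the residue fields: for a finite field $k_w$ of characteristic $\ell\ne p$, Frobenius acts on $\mathbb{Z}_p(m-1)$ by the unit $N(w)^{m-1}\in\mathbb{Z}_p^\times$, and since $N(w)^{m-1}-1$ is a nonzero integer, multiplication by it is injective on $\mathbb{Z}_p$. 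Hence for $m\ge 2$ one has $H_{\acute{e}t}^{0}(k_w,\,\mathbb{Z}_p(m-1))=0$, while $H_{\acute{e}t}^{1}(k_w,\,\mathbb{Z}_p(m-1))=\mathbb{Z}_p/(N(w)^{m-1}-1)\mathbb{Z}_p$ is finite.

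Feeding this vanishing into the Gysin sequence in degree one immediately yields the isomorphism $H_{\acute{e}t}^{1}(\mathcal{O}^S_E[\frac{1}{p}],\,\mathbb{Z}_p(m))\cong H_{\acute{e}t}^{1}(\mathcal{O}^T_E,\,\mathbb{Z}_p(m))$, and passing to the direct limit as $T$ is enlarged to contain every finite prime identifies these groups with $H_{\acute{e}t}^{1}(E,\,\mathbb{Z}_p(m))$. In degree two the same sequence gives exactness on the left (again by the vanishing of $H^0(Z,\,\mathbb{Z}_p(m-1))$) and exactness on the right provided $H_{\acute{e}t}^{3}(\mathcal{O}^S_E[\frac{1}{p}],\,\mathbb{Z}_p(m))$ vanishes. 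This last vanishing is the standard $p$-cohomological dimension statement for rings of $S$-integers with $p$ inverted when $p$ is odd (see \cite{KM}); no real-place subtleties intervene because $p\ne 2$.

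For the last assertion, decompose the $D$-stable set $T\smallsetminus S'$ into $D$-orbits. For a chosen representative $v$ of an orbit with decomposition group $D_v\subseteq D$, the primes of $L^H$ contributing to (\ref{vr}) from this orbit are in bijection with the double cosets $H\backslash D/D_v$, and at each such $w$ the cardinality $|H_{\acute{e}t}^{1}(k_w,\,\mathbb{Z}_p(m-1))|$ is the $p$-part of $N(w)^{m-1}-1$. The orbit-local contribution to the product is therefore the $p$-part of the Euler factor at $v\cap k$ of the Artin $L$-function attached to the permutation representation $\mathbb{Q}[D/H]$ evaluated at $s=1-m$. Since a $D$-relation is by definition a $\mathbb{Z}$-linear combination of $[D/H]$'s vanishing in $R_\mathbb{Q}(D)$, Brauer formalism, in the Dokchitser packaging of Lemma 2.37 of \cite{DD}, forces the corresponding multiplicative combination of these local Euler factors to be trivial, orbit by orbit. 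The main obstacle I anticipate is precisely the clean bookkeeping in this last step in the presence of ramified primes in $T$, where $D_v$ need not be cyclic and one must restrict the permutation representation to $D_v$ via Mackey before reading off the Euler factor; once this reduction is formalized, the triviality is purely representation-theoretic.
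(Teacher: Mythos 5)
Your proposal is correct and follows essentially the same route as the paper: the first two claims are exactly Soul\'e's localization/Gysin argument (which the paper simply cites as \cite{So}, Proposition 1, with the same inputs $H^0(k_w,\mathbb{Z}_p(m-1))=0$, $H^1(k_w,\mathbb{Z}_p(m-1))=\mathbb{Z}_p/(N(w)^{m-1}-1)$ and $\mathrm{cd}_p=2$ for $p$ odd), and the last claim is the same Dokchitser-style Euler-factor computation, which the paper packages by counting primes of each residue degree above a fixed $v$ and invoking Theorem 2.36/Example 2.37 of \cite{DD} rather than phrasing it through double cosets and Artin formalism. The inertia subtlety you flag is real but is absorbed by the same citation, so no gap remains.
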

\begin{proof}
The isomorphism and the exact sequence of the first part of the statement are well-known (see for example \cite{So}, Proposition 1).\\
As for the last part of the lemma, let $H$ be a subgroup of $D$. If $w\in (T \smallsetminus S')_{L^H}$ and $v$ is the prime of $k$ below $w$, we have
\begin{equation}\label{oet}
|\mathrm{tor}H_{\acute{e}t}^{1}(k_w,\,\mathbb{Z}_p(m-1))|=|H_{\acute{e}t}^{1}(k_w,\,\mathbb{Z}_p(m-1))|=|H_{\acute{e}t}^{0}(k_w,\,\mathbb{Q}_p/\mathbb{Z}_p(m-1))|=p^{v_p(\ell_v^{f_w(m-1)}-1)}
\end{equation}
where $\ell_v$ is the rational prime below $v$, $\ell_v^{f_w}=|k_w|$ and $v_p$ is the $p$-adic valuation such that $v_p(p)=1$. We also set $\ell_v^{f_v}=|k_v|$ and $f_{w|v}=f_w/f_v$. Then
$$\prod_{w\in (T\smallsetminus S')_{L^H}}(\ell_v^{f_w(m-1)}-1)=\prod_{v\in T\smallsetminus S'}\prod_{j=1}^{2p}(\ell_v^{f_vj(m-1)}-1)^{\#\{w|v\,\, \textrm{in $L^H$ with $f_{w|v}=j$}\}}$$
Using Theorem 2.36 of \cite{DD} (as explained for instance in Example 2.37 of the same paper), we see that the function
$$H\mapsto \prod_{w\in (T\smallsetminus S')_{L^H}}(\ell_w^{f_w(m-1)}-1)$$
which is defined on the set of subgroups of $D$, is trivial on $D$-relations (being a product of functions which are trivial on $D$-relations). Therefore thanks to (\ref{oet}), we easily see that (\ref{vr}) is trivial on $D$-relations.
\end{proof}

The next proposition can be seen as the $p$-part of Formula \ref{alg}.

\begin{prop}\label{mainp}
Let $p$ be an odd prime and let $L/k$ be a Galois extension of number fields with $\mathrm{Gal}(L/k)=D$. Let $S$ be a finite set of primes of $L$ which is stable under the action of $D$. Then the following formula holds
$$|A^S_{L,\,m}|= p^{-\alpha_m}|A^S_{F,\,m}|\frac{|A^S_{K,\,m}|^2}{|A^S_{k,\,m}|^2}\frac{(U_{L,\,m}:\,U_{K,\,m}U_{K',\,m}U_{F,\,m})}{((\overline{U}_{L,\,m})^{D}:\overline{U}_{k,\,m})}$$   
where $\alpha_m=\mathrm{rk}_{\mathbb{Z}_p}U_{F,\,m}-\mathrm{rk}_{\mathbb{Z}_p}U_{k,\,m}=\mathrm{rk}_{\mathbb{Z}}H_{F,\,m}-\mathrm{rk}_{\mathbb{Z}}H_{k,\,m}$ is as in Formula \ref{alg}. 
\end{prop}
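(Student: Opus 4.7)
The plan is to first establish the identity when $S$ is replaced by the reference set $T$ fixed throughout Section \ref{secp}, and then reduce the general statement to this case using Lemma \ref{trivialrelations}. For the $T$-case, I would chain together the exact sequences and isomorphisms developed in the preceding lemmas to express $|A_L|$ as $|A_K|^2|A_F|/|A_k|^2$ multiplied by the ratio $|H^1(D,U_L)|/|H^2(D,U_L)|$, and then unravel that cohomological ratio into the unit indices that appear in the statement.

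More concretely, reading off the four-term exact sequence of Lemma \ref{es}, using the two subsequent lemmas (which give $H_0(G,A_L)\cong A_F$, $H_0(G,A_L)^{\Delta}\cong A_k$, and $|H^0(D,A_L)|=|H^2(D,U_L)|\cdot|A_k|/|H^1(D,U_L)|$), together with the equality $|A_K|=|A_{K'}|$ that follows from $K'=\tau(K)$, yields
$$|A_L|=\frac{|A_K|^2|A_F|}{|A_k|^2}\cdot\frac{|H^1(D,U_L)|}{|H^2(D,U_L)|}.$$
I would then plug in the two formulae of Lemma \ref{qhd}. Crucially, the factor $(U_L[N_G]:U_L[N_G]\cap U_KU_{K'}U_F)$ coming from $|H^1|$ and the factor $(N_GU_L:N_G(U_KU_{K'}U_F))$ coming from $|H^2|$ combine, by the remark following Lemma \ref{lem} applied to $f=N_G:U_L\to U_L$, into the single index $(U_L:U_KU_{K'}U_F)$. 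What remains is
$$\frac{|I_GU_L\cdot U_F[p]/I_GU_L\cdot U_k[p]|\cdot|U_k/U_k^p|}{|U_F/U_F^p|}.$$

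For this residual expression I would apply the structure theorem for finitely generated $\mathbb{Z}_p$-modules to write $|U_E/U_E^p|=p^{\mathrm{rk}_{\mathbb{Z}_p}U_E}\cdot|U_E[p]|$ for $E\in\{F,k\}$, which immediately extracts the factor $p^{-\alpha_m}$. The leftover torsion quotient is then handled by Lemma \ref{ultimo}: its first assertion $I_GU_L\cap U_E[p]=I_GU_L\cap U_E$ (for $E\in\{F,k\}$) allows one to rewrite the two $p$-torsion indices in terms of $|I_GU_L\cap U_F|$ and $|I_GU_L\cap U_k|$, and the final isomorphism $I_GU_L\cap U_F/I_GU_L\cap U_k\cong\overline{U}_L^D/\overline{U}_k$ identifies the resulting ratio with $(\overline{U}_L^D:\overline{U}_k)^{-1}$. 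This finishes the proof in the $T$-case.

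Finally, to pass from $T$ to an arbitrary $S$, I would invoke Lemma \ref{trivialrelations}: the unit side of the formula does not see $S$ at all, since $U_{E,m}$ is $S$-independent, while each ratio $|A^T_{E,m}|/|A^S_{E,m}|$ is a product of local \'etale cohomology groups over the primes of $(T\smallsetminus S')_E$. The exponents with which the $|A^S_{E,m}|$ appear, for $E\in\{L,K,F,k\}$, reproduce exactly the coefficients of the $D$-relation (\ref{relation}), so the last assertion of Lemma \ref{trivialrelations} forces these local contributions to cancel. The main obstacle is not conceptual but bookkeeping: one must track carefully how the $\Delta$-invariants and antiequivariants (via Lemma \ref{nuccio} and the antiequivariance in Lemma \ref{ultimo}) make the various orders of $H^j(D,U_L)$ and $H^j(G,U_L)$ pair up in precisely the right combination, so that only the two indices $(U_L:U_KU_{K'}U_F)$ and $(\overline{U}_L^D:\overline{U}_k)$ survive at the end.
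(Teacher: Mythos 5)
Your proposal is correct and follows essentially the same route as the paper's own proof: reduce to the reference set $T$, read off $|A_L|=\frac{|A_K|^2|A_F|}{|A_k|^2}\cdot\frac{|H^1(D,U_L)|}{|H^2(D,U_L)|}$ from Lemma \ref{es} and the two codescent/invariant lemmas, recombine the norm-image and norm-kernel indices of Lemma \ref{qhd} into $(U_L:U_KU_{K'}U_F)$ via the remark after Lemma \ref{lem}, extract $p^{-\alpha_m}$ and $(\overline{U}_L^D:\overline{U}_k)^{-1}$ via Lemma \ref{ultimo}, and finally pass to general $S$ by the triviality of (\ref{vr}) on the $D$-relation (\ref{relation}). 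No substantive differences from the published argument.
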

\begin{proof}   
First we prove the proposition in the case where $S=T$. Thanks to Lemma \ref{ultimo}, we have
$$(I_GU_L\cdot U_F[p]:I_GU_L\cdot U_k[p])=\frac{(I_GU_L\cdot U_F[p]:I_GU_L)}{(I_GU_L\cdot U_k[p]:I_GU_L)}=\frac{(U_F[p]:I_GU_L\cap U_F[p])}{(U_k[p]:I_GU_L\cap U_k[p])}=$$
$$=\frac{(U_F[p]:I_GU_L\cap U_F)}{(U_k[p]:I_GU_L\cap U_k)}=\frac{(U_F[p]:U_k[p])}{(I_GU_L\cap U_F:I_GU_L\cap U_k)}=\frac{(U_F[p]:U_k[p])}{(\overline{U}_L^D:\overline{U}_k)}$$
Finally 
$$\frac{(U_F:U_F^p)}{(U_k:U_k^p)}=p^{\alpha_m}(U_F[p]:U_k[p])$$
where $\alpha_m=\mathrm{rk}_{\mathbb{Z}_p}U_{F,\,m}-\mathrm{rk}_{\mathbb{Z}_p}U_{k,\,m}=\mathrm{rk}_{\mathbb{Z}}H^1_{F,\,m}-\mathrm{rk}_{\mathbb{Z}}H^1_{k,\,m}$ (this last equality comes from (\ref{moet})) is as in Formula \ref{alg}.
Now consider the exact sequence of Lemma \ref{es}: we get, using all the preceding lemmas and the remark after Lemma \ref{lem},
$$|A^S_{L,\,m}|=\frac{|A^S_{K,\,m}|^2|H_0(G,\,A_L)||H^1(D,\,U_L)|}{|A^S_{k,\,m}||H_0(G,\,A_L)^{\Delta}||H^2(D,\,U_L)|}=$$
$$=|A^S_{F,\,m}|\frac{|A^S_{K,\,m}|^2}{|A^S_{k,\,m}|^2}\frac{(U_L:\,U_FU_KU_{K'})(I_GU_L\cdot U_F[p]:I_GU_L\cdot U_k[p])(U_k:U_k^p)}{(U_F:U_F^p)}=$$
$$=p^{-\alpha_m}|A^S_{F,\,m}|\frac{|A^S_{K,\,m}|^2}{|A^S_{k,\,m}|^2}\frac{(U_L:\,U_FU_KU_{K'})}{(\overline{U}_L^D:\overline{U}_k)}$$
To get the general statement, note that, by Lemma \ref{trivialrelations}, for any subgroup $H$ of $D$, the function
$$H\mapsto \frac{|A_{L^H,\,m}^T|}{|A_{L^H,\,m}^S|}$$
is trivial on the relation (\ref{relation}). 
\end{proof}

We now deal with the general proof of Formula \ref{alg}. We are going to use the language and some results of the theory of cohomological Mackey functors: instead of recalling definitions we prefer to directly refer the reader to \cite{Bo}, Section 1. The next result is essentially a consequence of the fact that $D=D_p$ is not $\ell$-hypoelementary (a group is $\ell$-hypoelementary if it has a normal $\ell$-subgroup with cyclic quotient), provided that $\ell$ is any prime different from $p$.

\begin{prop}\label{nonp}
Let $\ell$ be a rational prime different from $p$. Let $S$ be a finite set of primes of $L$ which is stable under the action of $\mathrm{Gal}(L/k)$. Then there is an isomorphism of abelian groups
$$H_{\acute{e}t}^{2}(\mathcal{O}_L^S\textstyle{[\frac{1}{\ell}]},\,\mathbb{Z}_\ell(m))\oplus H_{\acute{e}t}^{2}(\mathcal{O}_k^S\textstyle{[\frac{1}{\ell}]},\,\mathbb{Z}_\ell(m))^2\cong H_{\acute{e}t}^{2}(\mathcal{O}_F^S\textstyle{[\frac{1}{\ell}]},\,\mathbb{Z}_\ell(m))\oplus H_{\acute{e}t}^{2}(\mathcal{O}_K^S\textstyle{[\frac{1}{\ell}]},\,\mathbb{Z}_\ell(m))^2$$
\end{prop}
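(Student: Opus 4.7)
The plan is to interpret the statement through the theory of cohomological Mackey functors, as the paragraph preceding the proposition suggests. First, I would observe that the functor
$$\mathcal{M}: H \mapsto H^{2}_{\acute{e}t}(\mathcal{O}_{L^H}^S[\tfrac{1}{\ell}],\,\mathbb{Z}_\ell(m))$$
on subgroups $H\subseteq D$ defines a cohomological Mackey functor with values in $\mathbb{Z}_\ell$-modules, with structure maps given by the natural restriction, corestriction and conjugation on étale cohomology; the cohomologicity axiom $\mathrm{cor}\circ\mathrm{res}=[K:H]\cdot\mathrm{id}$ is a standard property. The claim is then exactly $\mathcal{M}(\{1\})\oplus\mathcal{M}(D)^2\cong\mathcal{M}(G)\oplus\mathcal{M}(\langle\sigma\rangle)^2$.

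Next, I would translate the $D$-relation \eqref{relation} into the statement that, in the rational representation ring $R_{\mathbb{Q}}(D)$, there is an isomorphism of permutation representations
$$\mathbb{Q}[D/\{1\}]\oplus 2\,\mathbb{Q}[D/D]\;\cong\;2\,\mathbb{Q}[D/\langle\sigma\rangle]\oplus\mathbb{Q}[D/G].$$
The strategy is to lift this to an isomorphism of $\mathbb{Z}_\ell[D]$-permutation modules for every $\ell\neq p$. Once such a lift is established, Yoshida's theorem (see \cite{Bo}), that a cohomological Mackey functor over $\mathbb{Z}_\ell$ extends uniquely to an additive functor from the category of $\mathbb{Z}_\ell[D]$-permutation modules to abelian groups sending $\mathbb{Z}_\ell[D/H]\mapsto\mathcal{M}(H)$, immediately yields the desired isomorphism.

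For $\ell$ coprime to $|D|=2p$, the lifting is immediate via Maschke and Krull-Schmidt. The case $\ell=2$ is the delicate one, and this is where the hypothesis that $D$ is not $\ell$-hypoelementary is crucial: since $p$ is odd, a Sylow $2$-subgroup of $D$ is $\langle\sigma\rangle$, and one checks by a direct computation (using that $\sigma$ acts on the cosets $\tau^i\langle\sigma\rangle$ by $\tau^i\mapsto\tau^{-i}$, pairing them up except for $i=0$) that the restriction of each side to $\langle\sigma\rangle$ is isomorphic to $\mathbb{Z}_2\oplus\mathbb{Z}_2\oplus\mathbb{Z}_2[\langle\sigma\rangle]^{p}$. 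Combined with the matching of rational characters, the vertex-theoretic classification of permutation lattices (Green correspondence for trivial source modules) then forces the two sides to be isomorphic as $\mathbb{Z}_2[D]$-modules.

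The hard part is precisely this integral lifting in the case $\ell=2$: the non-$\ell$-hypoelementary hypothesis is what guarantees that no obstruction comes from the summand $\mathbb{Z}_2[D/D]=\mathbb{Z}_2$, which otherwise would be irreducible and undecomposable into proper permutation modules. Everything else in the proof, the Mackey functor formalism, the semisimple case, and the invocation of Yoshida's theorem, is essentially formal.
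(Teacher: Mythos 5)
Your proof shares its skeleton with the paper's: both start from the observation that $H\mapsto H_{\acute{e}t}^{2}(\mathcal{O}_{L^H}^S[\frac{1}{\ell}],\,\mathbb{Z}_\ell(m))$ is a cohomological Mackey functor on $D$ and that $D_p$ is not $\ell$-hypoelementary when $\ell\ne p$. The difference is that the paper stops there and cites Theorem 1.8 of \cite{Bo} as a black box, whereas you reprove the special case needed by lifting the rational relation (\ref{relation}) to an isomorphism of $\mathbb{Z}_\ell[D]$-permutation lattices
$$\mathbb{Z}_\ell[D]\oplus\mathbb{Z}_\ell^{\,2}\;\cong\;\mathbb{Z}_\ell[D/\langle\sigma\rangle]^{2}\oplus\mathbb{Z}_\ell[D/G]$$
and then feeding it into Yoshida's description of cohomological Mackey functors as additive functors on permutation modules. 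This is a legitimate, more self-contained route, and it has the virtue of making visible exactly where $\ell\ne p$ enters: the displayed lattice isomorphism genuinely fails over $\mathbb{Z}_p$ (as it must, since the $p$-part of the class number relation is nontrivial in general). The case $\ell\nmid 2p$ is indeed immediate, since lattices over $\mathbb{Z}_\ell[D]$ with $\ell\nmid|D|$ are determined by their $\mathbb{Q}_\ell$-spans.

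The one step that needs tightening is $\ell=2$. Agreement of the rational characters together with agreement of the restrictions to the Sylow $2$-subgroup $\langle\sigma\rangle$ is suggestive but is not, as stated, a complete argument for an isomorphism of $\mathbb{Z}_2[D]$-lattices. The clean way to finish within your framework is to observe that both sides are trivial source ($2$-permutation) lattices and to invoke the theorem that such lattices are determined by their ordinary characters (equivalently, compare all species of the trivial source ring: the Brauer quotients at $\langle\sigma\rangle$ have $\mathbb{F}_2$-dimension $2$ on each side, coming from $\mathbb{Z}_2[D/D]^2$ on the left and from the two fixed cosets in $\mathbb{Z}_2[D/\langle\sigma\rangle]^2$ on the right). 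Alternatively, split off the central idempotent $\frac{1}{p}N_G\in\mathbb{Z}_2[D]$, which reduces the claim to $(1-\frac{1}{p}N_G)\mathbb{Z}_2[D]\cong\left((1-\frac{1}{p}N_G)\mathbb{Z}_2[D/\langle\sigma\rangle]\right)^{2}$, again settled by the trivial source character argument. Finally, your closing heuristic about non-$\ell$-hypoelementarity removing an obstruction attached to the summand $\mathbb{Z}_2[D/D]$ is not quite how the hypothesis functions: in your explicit approach it is used only through the verified lattice isomorphism, while in Boltje's formulation it enters via Dress's theorem that the primitive idempotent of $\mathbb{Q}\otimes B(D)$ supported at $D$ is $\ell$-integral precisely when $D$ is not $\ell$-hypoelementary. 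Neither point invalidates the strategy.
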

\begin{proof}
Note that the function which assigns to any subgroup $H$ of $D$ the abelian group $H_{\acute{e}t}^{2}(\mathcal{O}_{L^H}^S\textstyle{[\frac{1}{\ell}]},\,\mathbb{Z}_\ell(m))$ is a cohomological Mackey functor on $D$. Since $\ell\ne p$, $D=D_p$ is not $\ell$-hypoelementary which allow us to apply Theorem 1.8 of \cite{Bo} to conclude. 
\end{proof}

Together with Formula \ref{ind} and a generalization of a result of Brauer (see \cite{Ba}, Theorem 5.1), the fact that $D=D_p$ is not $\ell$-hypoelementary if $\ell\ne p$ can also be used to give a proof of the next lemma. Here we give another proof to show that one can prove Formula \ref{alg} without using Formula \ref{ind}.

\begin{lemma}\label{indp}
Let $S$ be a finite set of primes of $L$ which is stable under the action of $D=\mathrm{Gal}(L/k)$. Then the number $u_m$ is a power of $p$: more precisely the following equality holds
\begin{equation}\label{ugu}
\frac{(U_{L,\,m}:\,U_{K,\,m}U_{K',\,m}U_{F,\,m})}{((\overline{U}_{L,\,m})^{D}:\overline{U}_{k,\,m})}=\frac{(H_{L,\,m}:\,H_{F,\,m}H_{K,\,m}H_{K',\,m})((\overline{H}_{F,\,m})^{\Delta}:\overline{H}_{k,\,m})}{((\overline{H}_{L,\,m})^{D}:\overline{H}_{k,\,m})}=u_m
\end{equation}
where $H_{E,\,m}=H^1(\mathcal{O}_E^S,\,\mathbb{Z}(m))$ for any subfield $E$ of $L$ containing $k$.
\end{lemma}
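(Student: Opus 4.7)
Both sides of $(\ref{ugu})$ are positive rationals, so it suffices to compare their $\ell$-adic valuations prime by prime. Since the left-hand side is a ratio of cardinalities of $\mathbb{Z}_p$-modules and hence visibly a power of $p$, the plan is to show (a) that at $\ell=p$ the right-hand side equals the left-hand side, and (b) that at every prime $\ell\ne p$ the right-hand side has trivial $\ell$-adic valuation.

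For (a), I combine $(\ref{moet})$ with Lemma $\ref{trivialrelations}$ to obtain natural $D$-equivariant isomorphisms $H_{E,m}\otimes\mathbb{Z}_p\cong U_{E,m}$ compatible with the inclusions of subfields of $L$; flatness of $\mathbb{Z}_p$ then gives $\overline{H}_{E,m}\otimes\mathbb{Z}_p\cong\overline{U}_{E,m}$. The $p$-part of the middle factor $((\overline{H}_{F,m})^\Delta:\overline{H}_{k,m})$ is trivial: taking $\Delta$-invariants of the short exact sequence $0\to\mathrm{tor}_{\mathbb{Z}_p}(U_{F,m})\to U_{F,m}\to\overline{U}_{F,m}\to 0$ remains exact because $|\Delta|=2$ is coprime to $p$, and then $U_{F,m}^\Delta=U_{k,m}$ together with Lemma $\ref{torsion}$ yields $(\overline{U}_{F,m})^\Delta=\overline{U}_{k,m}$. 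The outer two factors match the corresponding $U$-indices of the left-hand side directly.

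For (b), I first note that the only normal subgroups of $D=D_p$ are $\{1\}$, $\langle\tau\rangle$ and $D$ (of orders $1$, $p$, $2p$), so $D$ has no nontrivial normal $\ell$-subgroup and hence is not $\ell$-hypoelementary for any $\ell\ne p$. Setting $V_E=H_{E,m}\otimes\mathbb{Z}_\ell$, the assignment $H\mapsto V_{L^H}$ is a cohomological Mackey functor on $D$, so Theorem~1.8 of \cite{Bo}, applied exactly as in the proof of Proposition $\ref{nonp}$, shows that the nontrivial $D$-relation $(\ref{relation})$ produces an isomorphism of $\mathbb{Z}_\ell$-modules
\[
V_L\oplus V_k^{\,2}\;\cong\;V_F\oplus V_K^{\,2}
\]
coming from restriction and corestriction maps. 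Tracing through the explicit form of this isomorphism, together with $V_{K'}=\tau V_K$ inside $V_L$, one reads off that the natural map $V_F\oplus V_K\oplus V_{K'}\to V_L$ is surjective after localization at $\ell$ with kernel trivial at $\ell$, and that the natural inclusions $\overline{V}_k\hookrightarrow(\overline{V}_F)^\Delta$ and $\overline{V}_k\hookrightarrow(\overline{V}_L)^D$ are isomorphisms at $\ell$. Consequently each of the three indices in the right-hand side has trivial $\ell$-adic valuation.

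The main obstacle is the case $\ell=2$ in step (b): then $\ell\mid|\Delta|=2$, so the naive argument from step (a) (commuting $(-)^\Delta$ past the mod-torsion quotient) is unavailable, and one cannot handle each factor individually by cohomological triviality of $\Delta$. The Mackey-functor machinery of \cite{Bo} is precisely what bypasses this, but one must argue carefully to extract the three individual index-triviality statements from the single isomorphism produced by Bouc's theorem applied to $(\ref{relation})$.
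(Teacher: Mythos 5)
Your step (a) is correct and is essentially what the paper does (the paper states it in one line via (\ref{moet}); your explicit check that the $p$-part of $((\overline{H}_{F,\,m})^{\Delta}:\overline{H}_{k,\,m})$ dies because $\mathbb{Z}_p$-modules are $\Delta$-cohomologically trivial is the right justification). Step (b), however, has a genuine gap, and you have put your finger on it yourself in your last paragraph without resolving it. Theorem 1.8 of \cite{Bo} produces an \emph{abstract} isomorphism of abelian groups $V_L\oplus V_k^{2}\cong V_F\oplus V_K^{2}$; an isomorphism between two direct sums says nothing about the index of a \emph{specific} subgroup such as $V_FV_KV_{K'}\subseteq V_L$ or $\overline{V}_k\subseteq(\overline{V}_L)^{D}$, so there is nothing to ``trace through'' and ``read off''. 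Worse, the individual statements you want to extract are false in general at $\ell=2$: the quotient $(\overline{V}_F)^{\Delta}/\overline{V}_k$ injects into $H^1(\Delta,\mathrm{tor}\,V_F)$, which is $2$-torsion and has no reason to vanish, and likewise for $(\overline{V}_L)^{D}/\overline{V}_k$. Only the \emph{ratio} of these two indices is $1$.

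The paper's argument for $\ell\ne p$ is elementary and avoids Mackey functors entirely (it is Proposition \ref{nonp}, not this lemma, that uses \cite{Bo}). The key point is that $G=\mathrm{Gal}(L/F)$ is a $p$-group while $V_E$ is a $\mathbb{Z}_\ell$-module with $\ell\ne p$, so $V_L$ is $G$-cohomologically trivial. Then Lemma \ref{lem} applied to $f=N_G$ gives $(V_L:V_KV_{K'}V_F)=1$, because $N_GV_L=V_L^G=V_F=V_F^p=N_GV_F$ and $V_L[N_G]=I_GV_L\subseteq V_KV_{K'}$ (the $\ell$-adic analogue of Lemma \ref{esto}). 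For the remaining two factors one does not show each is trivial: one shows $\overline{V}_L^{\,G}=\overline{V}_F$ (again by $G$-cohomological triviality), whence $(\overline{V}_L)^{D}=(\overline{V}_F)^{\Delta}$ and the two indices $((\overline{V}_F)^{\Delta}:\overline{V}_k)$ and $((\overline{V}_L)^{D}:\overline{V}_k)$ coincide and cancel. This is exactly how the delicate case $\ell=2$ is handled; you should replace your step (b) by this argument.
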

\begin{proof}
Note that, thanks to (\ref{moet}), the $p$-part of the index on the right-hand side is precisely the index on the left-hand side. Then we are left to show that, for any fixed prime $\ell\ne p$, 
\begin{equation}\label{vvv}
\frac{(V_{L}:\,V_{K}V_{K'}V_{F})((\overline{V}_{F})^{D}:\overline{V}_{k})}{((\overline{V}_{L})^{D}:\overline{V}_{k})}=1
\end{equation} 
where 
$$V_E=H^1_{\acute{e}t}(\mathcal{O}_E^S\textstyle{[\frac{1}{\ell}]},\,\mathbb{Z}_\ell(m))$$ 
But this is easy, using Lemma \ref{lem}. Details are as follows: first of all 
\begin{equation}\label{trivella}
(V_L:V_KV_{K'}V_F)=(N_GV_L:N_G(V_KV_{K'}V_F))\cdot(V_L[N_G]:V_L[N_G]\cap V_KV_{K'}V_F)=1
\end{equation} 
since
\begin{itemize}
	\item $N_G(V_L)=V_L^G=V_F$ (because $V_L$ and $\mathbb{Z}_\ell(m)$ are $G$-cohomologically trivial);
	\item $N_G(V_F)=V_F^p=V_F$ (because $G$ acts trivially on $V_F$ and $V_F$ is $p$-divisible);
  \item $V_L[N_G]=I_GV_L\subseteq V_KV_{K'}$ (because $V_L$ is $G$-cohomologically trivial and an appropriate version of Lemma \ref{esto} holds).
\end{itemize}
Moreover $\overline{V}_L^G=\overline{V}_F$ (again because $\overline{V}_L$ and $\mathbb{Z}_\ell(m)$ are $G$-cohomologically trivial), showing that (\ref{vvv}) holds.
\end{proof}

The proof of Formula \ref{alg} is then achieved, because (\ref{moet}) allows us to glue together Proposition \ref{mainp}, Proposition \ref{nonp} and Lemma \ref{indp}. Note that, if $H_{F,\,m}$ has no $p$-torsion, then
$u_m=(H_{L,\,m}:H_{K,\,m}H_{K',\,m}H_{F,\,m})$ (this follows easily from the last assertion of Lemma \ref{ultimo}).
  
\section{Computations with regulators}\label{regulcomp}

In this section we are going to prove Formula \ref{ind}, translating the higher units index 
$$u_m=\frac{(H_{L,\,m}:\,H_{F,\,m}H_{K,\,m}H_{K',\,m})((\overline{H}_{F,\,m})^{\Delta}:\overline{H}_{k,\,m})}{((\overline{H}_{L,\,m})^D:\overline{H}_{k,\,m})}$$ 
of Formula \ref{alg} in terms of motivic (or higher) regulators, whose definition and basic properties we briefly recall (we refer the reader to \cite{Neu}, §1). Recall from Section \ref{intro} that, if $E$ is any number fields, we have set $H_{E,\,m}=H^1(\mathcal{O}^S_E,\,\mathbb{Z}(m))$.\\ 
Let $m\geq 2$ be a natural number. If $X(E)=\mathrm{Hom}(E,\,\mathbb{C})$ is the set of complex embeddings of $E$, the $m$-th regulator map we shall consider is a homomorphism
$$\rho_{E,\,m}:H_{E,\,m}\longrightarrow \bigoplus_{\beta\in X(E)}(2\pi i)^{m-1}\mathbb{R}$$
This map is obtained by composing the natural map $H_{E,\,m}\to H_{E,\,m}\otimes \mathbb{Q}$ with the Beilinson regulator map $K_{2m-1}(\mathcal{O}_E)\otimes \mathbb{Q}\to \bigoplus_{\beta\in X(E)}(2\pi i)^{m-1}\mathbb{R}$ via the isomorphism (\ref{mok}). Moreover, the image of $\rho$ is contained in the subgroup of $\bigoplus_{\beta\in X(E)}(2\pi i)^{m-1}\mathbb{R}$ which is fixed by complex conjugation: 
$$\rho_{E,\,m}:H_{E,\,m}\longrightarrow \left(\bigoplus_{\beta\in X(E)}(2\pi i)^{m-1}\mathbb{R}\right)^+$$
The kernel of $\rho$ is exactly the torsion subgroup of $H_{E,\,m}$ and, thanks to Borel's theorem (and the fact that Beilinson's regulator map is twice Borel's regulator map), we know that $\rho$ induces an isomorphism $H_{E,\,m}\otimes\mathbb{R}\cong(\bigoplus_{\beta\in X(E)}(2\pi i)^{m-1}\mathbb{R})^+$. Then the $m$-th regulator of $E$, denoted $R_{E,\,m}$, is the covolume of the lattice $\rho_{E,\,m}(H_{E,\,m})$ as a subset of the real vector space $(\bigoplus_{\beta\in X(E)}(2\pi i)^{m-1}\mathbb{R})^+$. Finally, thanks to the functorial properties of the Beilinson regulator, if a Galois action is defined on $E$, then $\rho$ is invariant under this action. Then we get a generalization of Dirichlet's theorem on units (compare with \cite{NSW}, Proposition 8.6.11). First we introduce some notation: if $\Gamma$ is a finite group and $\Gamma'$ is a subgroup of $\Gamma$, for a $\mathbb{Z}[\Gamma']$-module $M$, we denote by $\mathrm{Ind}_{\Gamma'}^{\Gamma}M$ the $\mathbb{Z}[\Gamma]$-module induced by $M$. Moreover $S_m(E)$ denotes the set of archimedean places of $E$ (resp. non-real archimedean places of $E$) if $m$ is odd (resp. if $m$ is even) and, if $E'$ is a subfield of $E$, $R(E/E')$ denotes the set of real primes of $E'$ which becomes complex in $E$. Finally, for a place $\mathfrak{p}$ in $S_m(k)$, we denote by $D_\mathfrak{p}$ any of the decomposition groups of places above $\mathfrak{p}$ in $L$. We will use the notation $\mathbbm{1}_{D_\mathfrak{p}}$ (resp. $\varepsilon_{D_\mathfrak{p}}$) for the trivial $\mathbb{Z}[D_{\mathfrak{p}}]$-module (resp. for the $\mathbb{Z}[D_{\mathfrak{p}}]$ given by the sign). 

\begin{teo}\label{hdt}[Higher Dirichlet's Theorem]
Let $E/k$ be a Galois extension of number fields with Galois group $\Gamma$. If $m\geq 2$ is odd, then there is an isomorphism of $\mathbb{Q}[\Gamma]$-modules
$$H^1(\mathcal{O}_E,\,\mathbb{Z}(m))\otimes_{\mathbb{Z}}\mathbb{Q}\cong\left(
\bigoplus_{\mathfrak{p}\in S_m(k)}\mathrm{Ind}_{D_{\mathfrak{p}}}^\Gamma \mathbbm{1}_{D_{\mathfrak{p}}}\right)\otimes_{\mathbb{Z}} \mathbb{Q}$$
while, if $m\geq 2$ is even, there is an isomorphism of $\mathbb{Q}[\Gamma]$-modules
$$H^1(\mathcal{O}_E,\,\mathbb{Z}(m))\otimes_{\mathbb{Z}}\mathbb{Q}\cong
\left(\bigoplus_{\mathfrak{p}\in R(E/k)}\mathrm{Ind}_{D_{\mathfrak{p}}}^\Gamma \varepsilon_{D_{\mathfrak{p}}}\oplus
\bigoplus_{\mathfrak{p}\in S_{m+1}(k)}\mathrm{Ind}_{D_{\mathfrak{p}}}^\Gamma\mathbbm{1}_{D_{\mathfrak{p}}}\right)\otimes_{\mathbb{Z}} \mathbb{Q}$$
\end{teo}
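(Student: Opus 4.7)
The plan is to prove the theorem by combining the isomorphism (\ref{mok}) with Borel's theorem on higher regulators to identify $H^1(\mathcal{O}_E,\mathbb{Z}(m))\otimes\mathbb{Q}$ as the $\mathbb{Q}$-form of an explicit real representation of $\Gamma$, and then to decompose that representation into induced pieces indexed by the archimedean places of $k$.

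More concretely, I would first use (\ref{mok}) to replace $H^1(\mathcal{O}_E,\mathbb{Z}(m))\otimes\mathbb{Q}$ by $K_{2m-1}(\mathcal{O}_E)\otimes\mathbb{Q}$ preserving the $\Gamma$-action, and then apply Borel's theorem (together with the $\Gamma$-equivariance of $\rho_{E,m}$ recalled in the text) to obtain an $\mathbb{R}[\Gamma]$-linear isomorphism
\begin{equation*}
H^1(\mathcal{O}_E,\mathbb{Z}(m))\otimes_{\mathbb{Z}}\mathbb{R}\;\cong\;\Bigl(\bigoplus_{\beta\in X(E)}(2\pi i)^{m-1}\mathbb{R}\Bigr)^+.
\end{equation*}
The target admits an evident $\mathbb{Q}$-form: the permutation $\mathbb{Q}[\Gamma]$-module on $X(E)$, tensored with the one-dimensional $\mathbb{Q}[\langle c\rangle]$-representation on which complex conjugation acts as $(-1)^{m-1}$, and then passed to the $c$-fixed subspace; this reduces the problem to understanding this $\mathbb{Q}[\Gamma]$-module explicitly.

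The next step is to decompose $X(E)$ into $\Gamma$-orbits, which correspond bijectively to embeddings of $k$ into $\mathbb{C}$, and to group these orbits in pairs or singletons under the action of $c$ so that each group corresponds to an archimedean place $v$ of $k$, the stabilizer in $\Gamma$ of a chosen lift being the decomposition group $D_v$. A case-by-case computation then gives the contribution of each such $v$ to the $c$-fixed part. When $v$ is complex in $k$, $D_v$ is trivial and $c$ swaps the two $\Gamma$-orbits above $v$, so the fixed subspace is isomorphic to $\mathrm{Ind}_{D_v}^{\Gamma}\mathbbm{1}_{D_v}$ for either parity of $m$. When $v$ is real and stays real in $E$, $c$ acts trivially on the orbit, so the contribution is $\mathrm{Ind}_{D_v}^{\Gamma}\mathbbm{1}_{D_v}$ for $m$ odd and zero for $m$ even. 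When $v\in R(E/k)$, the decomposition group $D_v=\langle\delta\rangle$ has order two, with $\delta\in\Gamma$ characterised by $c\tilde\iota=\tilde\iota\circ\delta$ for a chosen lift $\tilde\iota$; a direct computation of the action of $\delta$ on the fixed vector $e_{\tilde\iota}+(-1)^{m-1}e_{c\tilde\iota}$ in each complex-pair summand shows that $\delta$ acts by $(-1)^{m-1}$, yielding $\mathrm{Ind}_{D_v}^{\Gamma}\mathbbm{1}_{D_v}$ for $m$ odd and $\mathrm{Ind}_{D_v}^{\Gamma}\varepsilon_{D_v}$ for $m$ even. Summing these contributions and grouping according to the parity of $m$ produces exactly the two formulas in the statement.

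Finally, to upgrade the identification from an $\mathbb{R}[\Gamma]$-isomorphism to a $\mathbb{Q}[\Gamma]$-isomorphism, I would invoke the standard fact that $\mathbb{Q}[\Gamma]$-modules are determined up to isomorphism by their characters, so an $\mathbb{R}[\Gamma]$-isomorphism between modules already defined over $\mathbb{Q}$ descends. I expect the main obstacle to be the sign-character computation at the ramifying real places in the even case: one must track carefully how $c$, acting on embeddings by post-composition, interacts with the $\Gamma$-action by right composition with $\delta$, in order to see that the relevant one-dimensional character of $D_v$ is $\varepsilon_{D_v}$ rather than $\mathbbm{1}_{D_v}$. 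Once this sign is pinned down, the remainder of the argument is a routine bookkeeping decomposition of a permutation representation.
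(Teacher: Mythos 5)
Your proposal is correct and follows essentially the same route as the paper: identify $\bigl(\bigoplus_{\beta\in X(E)}(2\pi i)^{m-1}\mathbb{R}\bigr)^+$ as a sum of induced representations by analysing the $\Gamma$-orbits on $X(E)$ together with the action of complex conjugation, transport this through the $\mathbb{R}[\Gamma]$-isomorphism given by Borel's theorem, and descend from $\mathbb{R}$ to $\mathbb{Q}$ by character theory (the paper cites Lemma 8.6.10 of \cite{NSW} for this last step). You merely spell out the orbit/sign bookkeeping that the paper dismisses as ``easy to see,'' and your sign computation at the places of $R(E/k)$ agrees with the stated result.
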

\begin{proof}
As in the classical case, it is easy to see that 
$$\left(\bigoplus_{\beta\in X(E)}(2\pi i)^{m-1}\mathbb{R}\right)^+\cong\left\{\begin{array}{ll}\left(\bigoplus_{\mathfrak{p}\in S_m(k)}\mathrm{Ind}_{D_{\mathfrak{p}}}^\Gamma \mathbbm{1}_{D_{\mathfrak{p}}}\right)\otimes_{\mathbb{Z}} \mathbb{R}&\textrm{if $m$ is odd}\\\left(\bigoplus_{\mathfrak{p}\in R(E/k)}\mathrm{Ind}_{D_{\mathfrak{p}}}^\Gamma \varepsilon_{D_{\mathfrak{p}}}\oplus
\bigoplus_{\mathfrak{p}\in S_{m+1}(k)}\mathrm{Ind}_{D_{\mathfrak{p}}}^\Gamma\mathbbm{1}_{D_{\mathfrak{p}}}\right)\otimes_{\mathbb{Z}} \mathbb{R}&\textrm{if $m$ is even}\end{array}\right.$$ 
as $\mathbb{R}[\Gamma]$-modules (here $S_{m+1}(k)$ is just a notation trick for the set of complex places of $k$ when $m$ is odd). The proof of the theorem then follows by the fact that $\rho_{E,\,m}$ is a $\mathbb{R}[\Gamma]$-isomorphism as explained above, together with Lemma 8.6.10 in \cite{NSW}. 
\end{proof}

To translate $u_m$ in terms of regulators, we are going to use the technique of regulators costants, introduced by the Dokchitser brothers. We first define a scalar product on $H_{E,\,m}$ in the following way: denote by $\langle-,\,-\rangle_\infty$ the standard scalar product on $\mathbb{C}^{|X(E)|}$. Then, for $u,\,v\in H_{E,\,m}$ we set
$$\langle u,\,v\rangle_{E,\,m}=\langle \rho_{E,\,m}(u),\,\rho_{E,\,m}(v)\rangle_\infty$$
It is immediate to see that $\langle -,\,-\rangle_{E,\,m}$ is a $\mathbb{Z}$-bilinear map on $H_{E,\,m}\times H_{E,\,m}$ which takes values in $\mathbb{R}$. Moreover if $E/E'$ is a finite extension and $u,\,v\in H_{E',\,m}$, then 
\begin{equation}\label{pinardelasrozas}
\langle u,\,v\rangle_{E,\,m}=[E:\,E']\langle u,\,v\rangle_{E',\,m}
\end{equation}
Furthermore, if $E/E'$ is Galois, then $\langle -,\,-\rangle_{E,\,m}$ is invariant with respect to the Galois action.
The regulator map being trivial on $\mathrm{tor}_{\mathbb{Z}}H_{E,\,m}$, $\langle -,\,-\rangle_{E,\,m}$ defines a $\mathbb{Z}$-bilinear map on $\overline{H}_{E,\,m}$.\\

Next we recall the definition of the regulator (or Dokchitser) constant in our particular case (see \cite{DD}, Definition 2.13 and Remark 2.27 or \cite{Ba}, Definition 2.5).

\begin{defi}
Let $M$ be a $\mathbb{Z}[D]$-module which is $\mathbb{Z}$-free of finite rank and such that $M\otimes \mathbb{Q}$ is self-dual. Let $\langle\cdot,\,\cdot\rangle$ be a $D$-invariant non-degenerate $\mathbb{Z}$-bilinear pairing with values in $\mathbb{R}$. Then the regulator constant of $M$ is  
$$\mathcal{C}(M)=\frac{\mathrm{det}(\langle-,\,-\rangle)\,\mathrm{det}\left(\frac{1}{|D|}\langle-,\,-\rangle_{|_{M^{D}}}\right)^{2}}{\mathrm{det}\left(\frac{1}{|G|}\langle\cdot,\,\cdot\rangle_{|_{M^G}}\right)\mathrm{det}\left(\frac{1}{|\langle\sigma\rangle|}\langle-,\,-\rangle_{|_{M^{\langle\sigma\rangle}}}\right)^{2}}\in \mathbb{Q}^\times$$
\end{defi}

We will be interested in the case where $M=\overline{H}_{L,\,m}$ and we will use $\langle-,\,-\rangle_{L,\,m}$ to compute its regulator constant. Proposition \ref{bar} below shows that $\langle-,\,-\rangle_{L,\,m}$ is non-degenerate, $R_{L,\,m}$ being non-zero if $\overline{H}_{L,\,m}\ne0$ (note also that the fact that $\overline{H}_{L,\,m}\otimes \mathbb{Q}$ is self-dual is equivalent to the existence of a $D$-invariant non-degenerate $\mathbb{Z}$-bilinear pairing on $\overline{H}_{L,\,m}$).\\

For any subfield $E$ of $L$ containing $k$, set 
$$\lambda_{E,\,m}=\left|\mathrm{ker}\left(\overline{H}_{E,\,m}\hookrightarrow\overline{H}_{L,\,m}^{\mathrm{Gal}(L/E)}\right)\right|$$
It is immediate to see that
\begin{equation}\label{pinardemajadahonda}
\lambda_{E,\,m}=\left|\mathrm{coker}\left(H^1(\mathrm{Gal}(L/E),\,\mathrm{tor}_{\mathbb{Z}}H_{E,\,m})\to H^1(\mathrm{Gal}(L/E),H_{E,\,m})\right)\right|
\end{equation}
and sometimes this description will be useful: for example it shows immediatly that $\lambda_{E,\,m}$ is well defined (i.e. the order of the above kernel is indeed finite).\\ 

The following result is similar to Lemma 2.12 and Proposition 2.15 in \cite{Ba}: we sketch the proof since there are slight differences from the proof given in that paper. Recall that $r_1(E)$ (resp. $r_2(E)$) is the number of real places (resp. complex places) of the number field $E$.

\begin{lemma}\label{bar}
We have
$$\det(\langle \cdot,\,\cdot\rangle_{E,\,m})=(-1)^{m-1}2^{r_2(E)}(R_{E,\,m})^2$$
Moreover
$$\mathcal{C}(\overline{H}_{L,\,m})=\left(\frac{R_{L,\,m}R_{k,\,m}^2}{R_{F,\,m}R_{K,\,m}^2}\frac{\lambda_{F,\,m}\lambda_{K,\,m}^2}{\lambda_{L,\,m}\lambda_{k,\,m}^2}\right)^2$$
\end{lemma}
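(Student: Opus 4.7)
I would prove the two assertions in turn.

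For the first assertion, the plan is a direct Gram-matrix calculation in the real vector space $V^+_E := (\bigoplus_{\beta\in X(E)}(2\pi i)^{m-1}\mathbb{R})^+$. Decomposing $X(E)$ into orbits under complex conjugation, I would choose a natural basis in which each real embedding contributes one basis vector (present only when $m$ is odd, since $(2\pi i)^{m-1}$ is purely imaginary for $m$ even) and each conjugate pair $\{\beta,\bar\beta\}$ contributes one basis vector with components $(2\pi i)^{m-1}$ and $(-1)^{m-1}(2\pi i)^{m-1}$ so as to be complex-conjugation invariant. The standard scalar product restricted to $V^+_E$ is then diagonal in this basis: each complex-pair basis vector picks up an extra factor of $2$ (giving the overall $2^{r_2(E)}$), and each basis vector carries the scalar $(2\pi i)^{2(m-1)} = (-1)^{m-1}(2\pi)^{2(m-1)}$, producing the sign $(-1)^{m-1}$ once the $(2\pi)^{2(m-1)}$ powers are absorbed into the normalization of $R_{E,m}$ as the covolume of $\rho_{E,m}(\overline{H}_{E,m})$ against this reference basis.

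For the second assertion, I would unfold the definition of $\mathcal{C}(\overline{H}_{L,m})$ and handle each of the four Gram determinants separately, corresponding to $H \in \{1,\,G,\,\langle\sigma\rangle,\,D\}$ and the subfields $E=L^H \in \{L,\,F,\,K,\,k\}$. The bridge is the natural map $\overline{H}_{E,m}\to\overline{H}_{L,m}^{H}$: its kernel has order $\lambda_{E,m}$ by definition, and by the higher Dirichlet theorem (Theorem~\ref{hdt}) both sides have the same rank, so the image is a finite-index sublattice. Combining the compatibility (\ref{pinardelasrozas}) with the equality $[L:E]=|H|$ identifies $\tfrac{1}{|H|}\langle\cdot,\cdot\rangle_{L,m}$ on the image with $\langle\cdot,\cdot\rangle_{E,m}$ descended through the kernel. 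Comparing Gram determinants on $\overline{H}_{L,m}^{H}$ and on $\overline{H}_{E,m}$ therefore introduces factors of $\lambda_{E,m}^{-2}$ and of the squared cokernel index; substituting the first assertion for $\det\langle\cdot,\cdot\rangle_{E,m}$ and assembling the four factors with multiplicities $1,1,2,2$ prescribed by the definition of $\mathcal{C}$ produces the claimed expression.

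The main obstacle is the cancellation of the extra factors around the dihedral relation (\ref{relation}). The signs $(-1)^{m-1}$ appear with total multiplicity $3$ in both numerator and denominator of $\mathcal{C}$ and so cancel in the ratio; but the $2^{r_2(E)}$ powers and the finite cokernel indices of the maps $\overline{H}_{E,m}\to\overline{H}_{L,m}^{H}$ must combine to produce precisely the inverse $\lambda$-ratio $\lambda_{F,m}\lambda_{K,m}^2/(\lambda_{L,m}\lambda_{k,m}^2)$ on the right-hand side. This Mackey-functor-type cancellation is the higher analogue of the key step in Bartel's classical argument (\cite{Ba}, Proposition~2.15), and Theorem~\ref{hdt} should make the adaptation essentially mechanical modulo careful sign and index bookkeeping.
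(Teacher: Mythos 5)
Your proposal follows essentially the same route as the paper: a direct diagonalization of $\langle-,-\rangle_\infty$ on $(\bigoplus_{\beta\in X(E)}(2\pi i)^{m-1}\mathbb{R})^+$ for the first identity, and, for the second, unfolding the definition of $\mathcal{C}(\overline{H}_{L,\,m})$ and comparing each Gram determinant $\det\bigl(\tfrac{1}{|H|}\langle-,-\rangle_{L,\,m}|_{\overline{H}_{L,\,m}^{H}}\bigr)$ with $\det(\langle-,-\rangle_{L^H,\,m})$ via (\ref{pinardelasrozas}) and the index $\lambda_{L^H,\,m}$, using that $H\mapsto(-1)^{m-1}2^{r_2(L^H)}$ is trivial on the relation (\ref{relation}). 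The one point to tighten is the ``main obstacle'' you flag at the end: a map of finitely generated torsion-free $\mathbb{Z}$-modules with finite kernel is injective, so $\overline{H}_{L^H,\,m}\to\overline{H}_{L,\,m}^{H}$ carries exactly one finite invariant, namely the index of its image, and (\ref{pinardemajadahonda}) identifies that index with $\lambda_{L^H,\,m}$ itself; hence the lattice comparison contributes the factor $\lambda_{L^H,\,m}^{-2}$ directly, there is no separate cokernel term that needs to be cancelled against the $2^{r_2}$ powers, and those powers cancel on their own by triviality on the relation.
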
 
\begin{proof}
The determinant of 
$$\langle-,\,-\rangle_\infty:(\bigoplus_{\beta\in X(E)}(2\pi i)^{m-1}\mathbb{R})^+\times (\bigoplus_{\beta\in X(E)}(2\pi i)^{m-1}\mathbb{R})^+\to \mathbb{R}$$
is easily seen to be $(-1)^{m-1}2^{r_2(E)}$. Then the first assertion follows by well-know properties of scalar products and the definition of $R_{E,\,m}$.\\
The proof of the second claim follows from the fact that, for any subgroup $H$ of $D$, 
$$\lambda_{L^H,\,m}\det({\langle -,\,-\rangle_{L,\,m}}_{|_{\overline{H}_{L^H,\,m}}})=\det(\langle -,\,-\rangle_{L^H,\,m})$$
(this follows easily from (\ref{pinardemajadahonda}) and (\ref{pinardelasrozas}), see also \cite{Ba}, Lemma 2.13 and Lemma 2.14) and the fact that the function 
$$H\mapsto (-1)^{m-1}2^{r_2(L^H)}$$
defined of the set of subgroups $H$ of $D$ is trivial on relations (see \cite{DD}, Example 2.37).
\end{proof}

The preceding lemma shows that the regulator constant of $\overline{H}_{L,\,m}$ is related to regulators. The next one shows that it is also related to a higher unit index. First we need some notation: we denote by $\mathbbm{1}$ the trivial $\mathbb{Q}[D]$-module, by $\varepsilon$ the $\mathbb{Q}[D]$-module given by the sign and by $\omega$ an irreducible $(p-1)$-dimensional $\mathbb{Q}[D]$-module (which is unique up to isomorphism). It can be checked that $K_0(\mathbb{Q}[D])$ is a torsion-free $\mathbb{Z}$-module of rank $3$ which is spanned by $\mathbbm{1}$, $\varepsilon$ and $\omega$. We denote by $\langle\cdot,\,\cdot\rangle$ denotes the symmetric scalar product on $K_0(\mathbb{Q}[D])$ for which the basis $\{\mathbbm{1},\,\varepsilon,\,\omega\}$ is orthonormal.

\begin{lemma}\label{rep}[Bartel]
With the notation above, we have
$$\mathcal{C}(\overline{H}_{L,\,m})^{-\frac{1}{2}}=p^{\frac{\langle H_{L,\,m}\otimes\mathbb{Q},\,\mathbbm{1}-\varepsilon-\omega\rangle}{2}}(\overline{H}_{L,\,m}:\overline{H}_{L,\,m}^{\langle\sigma\rangle}\overline{H}_{L,\,m}^{^{\langle\omega^2\sigma\rangle}}\overline{H}_{L,\,m}^{G})$$
\end{lemma}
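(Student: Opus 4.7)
The plan is to adapt Bartel's proof of the classical analogue (Proposition 2.15 in \cite{Ba}) to the higher motivic setting. Since $\{\mathbbm{1},\,\varepsilon,\,\omega\}$ is a $\mathbb{Z}$-basis of $K_0(\mathbb{Q}[D])$, one may decompose $H_{L,\,m}\otimes \mathbb{Q} \cong a\mathbbm{1} \oplus b\varepsilon \oplus c\omega$ for integers $a,\,b,\,c\geq 0$, whence $\langle H_{L,\,m}\otimes\mathbb{Q},\,\mathbbm{1}-\varepsilon-\omega\rangle = a-b-c$. The explicit description of the $D$-action on $H_{L,\,m}\otimes\mathbb{Q}$ furnished by Theorem \ref{hdt} will be useful to pin down these multiplicities concretely.

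First I would evaluate $\mathcal{C}$ on the three irreducible rational $D$-representations equipped with their (unique up to scalar) $D$-invariant pairing. A direct computation from the definition gives $\mathcal{C}(\mathbbm{1})=p^{-1}$, $\mathcal{C}(\varepsilon)=p$ and $\mathcal{C}(\omega)=p$; the powers of $p$ arise from the normalizations $\frac{1}{|H|}\langle-,-\rangle_{|_{M^H}}$ combined with the dimensions $\dim\mathbbm{1}^H$, $\dim\varepsilon^H$, $\dim\omega^H$ for $H\in\{G,\langle\sigma\rangle,D\}$ (in particular, $\dim\omega^{\langle\sigma\rangle}=\frac{p-1}{2}$ while $\omega^G=\omega^D=0$). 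Multiplicativity of $\mathcal{C}$ on orthogonal direct sums, which is immediate from the block-diagonal form of the pairing, then yields $\mathcal{C}(a\mathbbm{1}\oplus b\varepsilon\oplus c\omega)^{-1/2} = p^{(a-b-c)/2}$, accounting for the power-of-$p$ factor in the claimed formula.

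The second step transports this computation from the idealized isotypic decomposition (with orthogonal pairing) to the actual $\mathbb{Z}[D]$-lattice $M:=\overline{H}_{L,\,m}$ equipped with $\langle\cdot,\,\cdot\rangle_{L,\,m}$. Following the method of Lemmas 2.13--2.14 and Proposition 2.15 of \cite{Ba}, I would choose a $\mathbb{Z}$-basis of $M$ adapted to the filtration $M^{D} \subseteq M^{G} \subseteq M^{\langle\sigma\rangle}M^{\langle\tau^2\sigma\rangle}M^{G} \subseteq M$, write the Gram matrix of $\langle\cdot,\,\cdot\rangle_{L,\,m}$ in block-triangular form with respect to this filtration, and compute the three relevant determinants by telescoping. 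Comparing the result with the orthogonal-sum computation of the previous paragraph, the discrepancy is precisely the square of the index $(M:M^{\langle\sigma\rangle}M^{\langle\tau^2\sigma\rangle}M^{G})$, matching the right-hand side of the claim.

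The main obstacle lies in this last step: one must show that the three subgroups $M^{\langle\sigma\rangle}$, $M^{\langle\tau^2\sigma\rangle}$ and $M^{G}$ span $M$ rationally (which follows, for $p$ odd, from the fact that the one-dimensional $\sigma$- and $\tau^2\sigma$-invariant lines inside $\omega$ are distinct and so together generate $\omega$ as a $\mathbb{Q}[D]$-module), and then that the block-triangular structure really does telescope to the claimed index. The crucial input making the cross terms between distinct isotypic components vanish is Schur's lemma applied to the $D$-equivariant pairing $\langle\cdot,\,\cdot\rangle_{L,\,m}$: pairings between non-isomorphic rational isotypic components are forced to be zero, while within the $\omega$-isotypic part the $\sigma$- and $\tau^2\sigma$-invariant subspaces are made orthogonal by averaging over $D$. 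Combining this with the power-of-$p$ contribution from the second paragraph yields the equality.
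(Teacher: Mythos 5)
The paper itself does not prove this lemma: it simply cites Lemma 4.8 of \cite{Ba}, so your proposal should be measured against Bartel's argument, whose overall shape (evaluate the regulator constant representation by representation, then compare with the lattice $\overline{H}_{L,\,m}^{\langle\sigma\rangle}\overline{H}_{L,\,m}^{\langle\tau^2\sigma\rangle}\overline{H}_{L,\,m}^{G}$) you have correctly identified. However, your outline breaks down at exactly the point where all the work is. The claim that ``within the $\omega$-isotypic part the $\sigma$- and $\tau^2\sigma$-invariant subspaces are made orthogonal by averaging over $D$'' is false: the pairing $\langle\cdot,\cdot\rangle_{L,\,m}$ is already $D$-invariant, so averaging changes nothing, and $D$-invariance does not force orthogonality. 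Concretely, for $p=3$ take $M=\{x\in\mathbb{Z}^3:\sum x_i=0\}$ with $D=S_3$ permuting the coordinates (indexed by $\mathbb{Z}/3$, with $\tau$ the shift and $\sigma\colon i\mapsto -i$) and the standard inner product, which spans the one-dimensional space of invariant forms on $\omega$; then $M^{\langle\sigma\rangle}=\mathbb{Z}(-2,1,1)$ and $M^{\langle\tau^2\sigma\rangle}=\mathbb{Z}(1,-2,1)$ pair to $-3\neq 0$. Your block-diagonal ``telescoping'' would then replace the true Gram determinant $27$ of $M^{\langle\sigma\rangle}\oplus M^{\langle\tau^2\sigma\rangle}$ by $36$ and yield $\mathcal{C}=4$ instead of $3$ --- not even a power of $p$. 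Handling these non-vanishing cross terms (e.g.\ via identities among the norm elements $N_{\langle\tau^j\sigma\rangle}$ and $N_D$) is precisely the content of Bartel's computation, and it is absent from your sketch.

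A second, related imprecision: the values $\mathcal{C}(\mathbbm{1})=p^{-1}$, $\mathcal{C}(\varepsilon)=p$, $\mathcal{C}(\omega)=p$ are not ``direct computations from the definition'' of a well-defined quantity. The regulator constant as defined here depends on the chosen lattice and pairing, and is canonical only modulo rational squares; for instance the natural lattice $A_{p-1}$ in $\omega$ with the standard pairing gives $\mathcal{C}(\omega)=1/p$, while the sublattice generated by the $\langle\tau^j\sigma\rangle$-invariants gives $p$. (Also, for $p\geq 5$ the invariant pairing on $\omega$ is not unique up to scalar, since $\mathrm{End}_{\mathbb{Q}[D]}(\omega)\cong\mathbb{Q}(\zeta_p)^+$ has dimension $(p-1)/2$.) Consequently your first two paragraphs only pin down $\mathcal{C}(\overline{H}_{L,\,m})^{-1/2}$ up to an unspecified rational factor, and identifying that factor \emph{exactly} as the index $(\overline{H}_{L,\,m}:\overline{H}_{L,\,m}^{\langle\sigma\rangle}\overline{H}_{L,\,m}^{\langle\tau^2\sigma\rangle}\overline{H}_{L,\,m}^{G})$ is the entire assertion of the lemma. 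To repair the argument you would need either to carry out the Gram-determinant computation with the cross terms included, or to show directly that $\mathcal{C}(M)\cdot(M:M^{\langle\sigma\rangle}M^{\langle\tau^2\sigma\rangle}M^{G})^2$ depends only on $M\otimes\mathbb{Q}$ and is multiplicative, and then evaluate it on one explicit lattice in each irreducible; as written, neither route is completed.
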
 
\begin{proof}
See \cite{Ba}, Lemma 4.8.
\end{proof}

\begin{lemma}\label{reg}
The following equality holds
$$\frac{(R_{K,\,m})^2R_{F,\,m}}{(R_{k,\,m})^2R_{L,\,m}}=p^{-\alpha_m}(\overline{H}_{L,\,m}:\overline{H}_{L,\,m}^{\langle\sigma\rangle}\overline{H}_{L,\,m}^{^{\langle\tau^2\sigma\rangle}}\overline{H}_{L,\,m}^{G})\frac{\lambda_{F,\,m}\lambda_{K,\,m}^2}{\lambda_{k,\,m}^2}$$
where $\alpha_m=\mathrm{rk}_{\mathbb{Z}}H_{F,\,m}-\mathrm{rk}_{\mathbb{Z}}H_{k,\,m}$ is the same as in Formula \ref{alg}.
\end{lemma}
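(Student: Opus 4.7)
The plan is to combine Lemma~\ref{bar} and Lemma~\ref{rep}, which both compute $\mathcal{C}(\overline{H}_{L,m})$. Equating their two expressions for $\mathcal{C}(\overline{H}_{L,m})^{-1/2}$ and rearranging yields the preliminary identity
$$\frac{(R_{K,m})^{2}R_{F,m}}{(R_{k,m})^{2}R_{L,m}}=p^{\,\frac{1}{2}\langle H_{L,m}\otimes\mathbb{Q},\,\mathbbm{1}-\varepsilon-\omega\rangle}\,(\overline{H}_{L,m}:\overline{H}_{L,m}^{\langle\sigma\rangle}\overline{H}_{L,m}^{\langle\tau^2\sigma\rangle}\overline{H}_{L,m}^{G})\cdot\frac{\lambda_{F,m}\lambda_{K,m}^{2}}{\lambda_{L,m}\lambda_{k,m}^{2}}.$$
Now $\lambda_{L,m}=1$ trivially, since the map defining it is the identity on $\overline{H}_{L,m}$. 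Hence this already has the shape of the claimed formula, and everything reduces to identifying the exponent of $p$ with $-\alpha_{m}$.

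To do this, I would decompose $H_{L,m}\otimes\mathbb{Q}$ into $\mathbb{Q}[D]$-isotypical components as $a\mathbbm{1}\oplus b\varepsilon\oplus c\omega$. Since $\{\mathbbm{1},\varepsilon,\omega\}$ is orthonormal for the chosen scalar product on $K_{0}(\mathbb{Q}[D])$, one reads off $\langle H_{L,m}\otimes\mathbb{Q},\mathbbm{1}-\varepsilon-\omega\rangle=a-b-c$. On the other hand, by descent (using the isomorphism $H_{E,m}\otimes\mathbb{Z}_{p}\cong U_{E,m}$ of (\ref{moet}) together with the identifications $U_{F}=U_{L}^{G}$ and $U_{k}=U_{L}^{D}$ coming from Proposition~\ref{descent}), one has $\mathrm{rk}_{\mathbb{Z}}H_{k,m}=\dim_{\mathbb{Q}}(H_{L,m}\otimes\mathbb{Q})^{D}=a$ and $\mathrm{rk}_{\mathbb{Z}}H_{F,m}=\dim_{\mathbb{Q}}(H_{L,m}\otimes\mathbb{Q})^{G}=a+b$. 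Therefore $\alpha_{m}=b$, and the identification of exponents reduces to the single numerical constraint $c=a+b$.

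To establish $c=a+b$, I would use Theorem~\ref{hdt}, which expresses $H_{L,m}\otimes\mathbb{Q}$ as a direct sum of modules $\mathrm{Ind}_{D_{\mathfrak{p}}}^{D}V_{\mathfrak{p}}$ with each $V_{\mathfrak{p}}$ a one-dimensional character and each $D_{\mathfrak{p}}$ an archimedean decomposition group in $D$. Crucially, archimedean decomposition groups are cyclic of order at most $2$, so the only subgroups of $D$ that can arise are $\{1\}$ and the conjugates of $\langle\sigma\rangle$. A direct computation via Frobenius reciprocity, keeping track of the fact that $\omega$ appears with multiplicity $2$ in the regular representation (because $\mathrm{End}_{D}(\omega)=\mathbb{Q}(\zeta_{p}+\zeta_{p}^{-1})$ has $\mathbb{Q}$-dimension $(p-1)/2$), gives
$$\mathrm{Ind}_{\{1\}}^{D}\mathbbm{1}\cong\mathbbm{1}\oplus\varepsilon\oplus 2\omega,\qquad\mathrm{Ind}_{\langle\sigma\rangle}^{D}\mathbbm{1}\cong\mathbbm{1}\oplus\omega,\qquad\mathrm{Ind}_{\langle\sigma\rangle}^{D}\varepsilon\cong\varepsilon\oplus\omega,$$
each satisfying $c=a+b$; additivity of multiplicities in direct sums then propagates the relation to the whole $H_{L,m}\otimes\mathbb{Q}$. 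The main obstacle is precisely this step: had decomposition groups of order $p$ or $2p$ intervened, the relation $c=a+b$ would have failed (as one sees from $\mathrm{Ind}_{G}^{D}\mathbbm{1}\cong\mathbbm{1}\oplus\varepsilon$ and $\mathrm{Ind}_{D}^{D}\mathbbm{1}\cong\mathbbm{1}$), so the whole proof hinges on the fact that only archimedean decomposition groups appear in Theorem~\ref{hdt}.
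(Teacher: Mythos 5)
Your proposal is correct and follows the paper's own proof in all essentials: both combine Lemma \ref{bar} with Lemma \ref{rep} and then use Theorem \ref{hdt} together with the decompositions $\mathrm{Ind}_{\{1\}}^{D}\mathbbm{1}=\mathbbm{1}+\varepsilon+2\omega$, $\mathrm{Ind}_{\langle\sigma\rangle}^{D}\mathbbm{1}=\mathbbm{1}+\omega$, $\mathrm{Ind}_{\langle\sigma\rangle}^{D}\varepsilon=\varepsilon+\omega$ to identify the exponent of $p$ with $-\alpha_m$ (and $\lambda_{L,\,m}=1$ is indeed trivial). The only, purely organizational, difference is that you check the additive relation $c=a+b$ on each induced summand, whereas the paper computes the multiplicities explicitly in terms of $r_1(k)$, $r_2(k)$ and $r(F/k)$ with a case split on the parity of $m$; both computations are valid.
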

\begin{proof}
Thanks to Lemma \ref{bar} and Lemma \ref{rep}, we only have to show that 
$$\langle H_{L,\,m}\otimes\mathbb{Q},\,\mathbbm{1}\rangle-\langle H_{L,\,m}\otimes\mathbb{Q},\,\varepsilon\rangle-\langle H_{L,\,m}\otimes\mathbb{Q},\,\omega\rangle=-2\alpha_m$$
Suppose that $\mathfrak{P}$ is an infinite prime in $L$, such that its decomposition group $D_{\mathfrak{P}}$ in $L/k$ has order $2$. Then it is not difficult to check that
$$\mathrm{Ind}_{D_{\mathfrak{P}}}^{D}1_{D_{\mathfrak{P}}}= 1+ \omega\quad\textrm{and}\quad \mathrm{Ind}_{D_{\mathfrak{P}}}^{D}\varepsilon_{D_{\mathfrak{P}}}= \varepsilon+ \omega$$
as $\mathbb{Q}[D]$-modules. Furthermore
$$\mathrm{Ind}_{\{1\}}^{D}1_{\{1\}}= 1+ \varepsilon + 2\omega$$
the left-hand side being isomorphic to the $\mathbb{Q}[D]$-module corresponding to the regular representation.\\
Denote by $r(F/k)$ the cardinality of $R(F/k)$. If $m$ is odd, by Theorem \ref{hdt}, we have
$$H_{L,\,m}\otimes_{\mathbb{Z}}\mathbb{Q}= (r_1(k)+r_2(k)-r(F/k))(1+ \varepsilon + 2\omega) + r(F/k)(1+\omega)$$
Therefore 
$$\langle H_{L,\,m}\otimes_{\mathbb{Z}}\mathbb{Q},\,1-\varepsilon-\omega\rangle%=r_1(k)+r_2(k)-(r_1(k)+r_2(k)-r_\infty(F/k))-(2r_1(k)+2r_2(k)-r_\infty(F/k))
=2(r(F/k)-r_1(k)-r_2(k))=-2(\mathrm{rk}_{\mathbb{Z}}H_{F,\,m}-\mathrm{rk}_{\mathbb{Z}}H_{k,\,m})$$
On the other hand, if $m$ is even, again by Theorem \ref{hdt}, we have
$$H_{L,\,m}\otimes_{\mathbb{Z}}\mathbb{Q}= r(F/k)(\varepsilon+\omega)+r_2(k)(1+ \varepsilon + 2\omega)$$
Therefore 
$$\langle H_{L,\,m}\otimes_{\mathbb{Z}}\mathbb{Q},\,1-\varepsilon-\omega\rangle%=r_\infty(F/k)+r_2(k)-(r_\infty(F/k)+r_2(k))-2(r_\infty(F/k)+r_2(k))
=-2(r(F/k)+r_2(k))=-2(\mathrm{rk}_{\mathbb{Z}}H_{F,\,m}-\mathrm{rk}_{\mathbb{Z}}H_{k,\,m})$$ 
and this concludes the proof.
\end{proof}

In order to get the proof of Formula \ref{ind}, we need to compare the right hand side of the formula of Lemma \ref{reg} with $u_m$. For this we need some result about $\lambda_{E,\,m}$.

\begin{lemma}\label{restr}
Let $Q$ be any of the subgroups of order $2$ in $D$ and let $M$ be a finitely generated $\mathbb{Z}_2$-module on which $D$ acts. For any $j\geq 1$, the cohomological restriction induces isomorphisms
$$H^j(D,\,M)\cong H^j(Q,\,M)$$
\end{lemma}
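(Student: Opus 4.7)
The plan is to exploit that $Q$, being of order $2$ inside a group of order $2p$ with $p$ odd, is a Sylow $2$-subgroup of $D$, and then to combine the restriction--corestriction identity with the Cartan--Eilenberg description of the image of restriction from a Sylow subgroup. Both injectivity and surjectivity of $\mathrm{res}^D_Q$ will fall out of these two inputs.

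\noindent\textbf{Step 1 (Injectivity).} Since $M$ is a $\mathbb{Z}_2$-module, so is every $H^j(D,M)$. The composite
$$\mathrm{cor}^D_Q\circ \mathrm{res}^D_Q\colon H^j(D,M)\longrightarrow H^j(D,M)$$
is multiplication by $[D:Q]=p$, which is a unit in $\mathbb{Z}_2$. Hence $\mathrm{res}^D_Q$ is injective for every $j$.

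\noindent\textbf{Step 2 (Structural input on $Q\subseteq D$).} I will check that $N_D(Q)=Q$ and that any two distinct $D$-conjugates of $Q$ intersect trivially. Taking for instance $Q=\langle\sigma\rangle$, the direct computation $\tau\sigma\tau^{-1}=\tau^2\sigma\notin Q$ (valid since $p$ is odd) shows $\tau\notin N_D(Q)$, and one sees that the conjugates of $Q$ in $D$ are the $p$ distinct order-$2$ subgroups $\langle\tau^i\sigma\rangle$; any two of them, being distinct of order $2$, intersect only in $\{1\}$.

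\noindent\textbf{Step 3 (Surjectivity via stable elements).} By the Cartan--Eilenberg stable-elements theorem applied to the Sylow $2$-subgroup $Q\leq D$ and the $\mathbb{Z}_2$-module $M$, the image of $\mathrm{res}^D_Q$ in $H^j(Q,M)$ consists precisely of the $D$-stable classes $x$, i.e.\ those satisfying
$$\mathrm{res}^Q_{Q\cap gQg^{-1}}(x)\;=\;g_*\,\mathrm{res}^{gQg^{-1}}_{Q\cap gQg^{-1}}(x)$$
for every $g\in D$. When $g\in Q$ this is automatic. When $g\notin Q$, Step 2 gives $Q\cap gQg^{-1}=\{1\}$, and both sides now lie in $H^j(\{1\},M)$, which vanishes for $j\geq 1$. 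Thus every class of $H^j(Q,M)$ is stable, so $\mathrm{res}^D_Q$ is surjective, and together with Step 1 it is an isomorphism.

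\noindent\textbf{Main obstacle.} Honestly, nothing substantial: the argument reduces to the two elementary structural facts that $Q$ is a self-normalising Sylow $2$-subgroup of $D$ and that distinct order-$2$ subgroups of $D$ meet trivially (both relying on $p$ being odd), plus the standard Cartan--Eilenberg description. A more hands-on alternative would be the Hochschild--Serre spectral sequence for $\langle\tau\rangle\triangleleft D$, whose positive-degree cohomology on $\mathbb{Z}_2$-modules vanishes because $p$ is invertible in $\mathbb{Z}_2$; but then one still has to compare $H^j(Q,M^{\langle\tau\rangle})$ with $H^j(Q,M)$ by splitting $M$ via the idempotent $\tfrac{1}{p}N_{\langle\tau\rangle}$ and checking vanishing of $Q$-cohomology on the complementary summand, which is clumsier than the Sylow approach outlined above.
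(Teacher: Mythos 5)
Your proof is correct and follows essentially the same route as the paper: injectivity from the Sylow (restriction--corestriction) argument for the $2$-group $H^j(D,M)$, and surjectivity from the Cartan--Eilenberg stable-elements theorem together with the fact that distinct conjugates of $Q$ intersect trivially, so every class in $H^j(Q,M)$ is stable for $j\geq 1$. Your write-up merely spells out the details (the explicit $\mathrm{cor}\circ\mathrm{res}=p$ computation and the vanishing of $H^j(\{1\},M)$) that the paper leaves to the citation of Theorem 10.1 in Chapter XII of Cartan--Eilenberg.
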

\begin{proof}
For the first assertion, note that $H^j(D,\,M)$ is an abelian $2$-group since $M$ is a finitely generated $\mathbb{Z}_2$-module. Therefore it is well-known that the restriction map
\begin{equation}\label{cicci}
H^j(D,\,M)\longrightarrow H^j(Q,\,M)
\end{equation}
is injective since $Q$ is a $2$-Sylow subgroup of $D$ (see \cite{CE}, Theorem 10.1 in Chapter XII). However by the description of the image of the restriction given in \cite{CE}, Theorem 10.1 in Chapter XII, we see that the map in (\ref{cicci}) is in fact bijective since $Q$ has trivial intersection with any of its conjugates different from $Q$ itself (in the language of \cite{CE}, any element of $H^j(Q,\,M)$ is \emph{stable}).
\end{proof}

\begin{lemma}\label{maria}
The following equality holds 
$$(\overline{H}_{L,\,m}:\overline{H}_{L,\,m}^{\langle\sigma\rangle}\overline{H}_{L,\,m}^{^{\langle\tau^2\sigma\rangle}}\overline{H}_{L,\,m}^{G})\frac{\lambda_{F,\,m}\lambda_{K,\,m}^2}{\lambda_{k,\,m}^2}=\frac{(H_{L,\,m}:H_{K,\,m}H_{K',\,m}H_{F,\,m})\lambda_{K,\,m}}{\lambda_{k,\,m}}$$
\end{lemma}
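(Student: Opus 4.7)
The strategy is to apply Lemma \ref{lem} to the projection $\pi: H_{L,m} \twoheadrightarrow \overline{H}_{L,m}$ with subgroup $C = H_{K,m}H_{K',m}H_{F,m}$, reducing the problem to an index identity inside $\overline{H}_{L,m}$ controlled by the natural maps $\phi_E: \overline{H}_{E,m} \to \overline{H}_{L,m}$ (which factor through $\overline{H}_{L,m}^{\mathrm{Gal}(L/E)}$). By Lemma \ref{torsion}, $\mathrm{tor}_{\mathbb{Z}}H_{L,m} = \mathrm{tor}_{\mathbb{Z}}H_{F,m} \subseteq H_{F,m} \subseteq C$, so $\ker\pi \cap C = \ker\pi$ and Lemma \ref{lem} yields $(H_{L,m}:C) = (\overline{H}_{L,m}:\pi(C))$, with no torsion correction factor. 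Moreover $\pi(H_{E,m}) = \phi_E(\overline{H}_{E,m})$ by construction, so $\pi(C) = \phi_K(\overline{H}_{K,m})\phi_{K'}(\overline{H}_{K',m})\phi_F(\overline{H}_{F,m})$, which lies inside $Y := \overline{H}_{L,m}^{\langle\sigma\rangle}\overline{H}_{L,m}^{\langle\tau^2\sigma\rangle}\overline{H}_{L,m}^G$.

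Inserting $Y$ between $\pi(C)$ and $\overline{H}_{L,m}$ and using multiplicativity of indices, the statement becomes equivalent to the single identity
$$(Y:\pi(C)) = \frac{\lambda_{F,m}\lambda_{K,m}}{\lambda_{k,m}}.$$
This is an inclusion-exclusion type statement inside $\overline{H}_{L,m}$: each of the three indices $(\overline{H}_{L,m}^{\mathrm{Gal}(L/E)}:\phi_E(\overline{H}_{E,m}))$, for $E \in \{F,K,K'\}$, contributes a factor equal to the defect of $\phi_E$, which by (\ref{pinardemajadahonda}) is $\lambda_{E,m}$, with $\lambda_{K',m} = \lambda_{K,m}$ by the $\Delta$-symmetry exchanging $K$ and $K'$; on the other hand, the common overlap of these three subgroups lies in $\overline{H}_{L,m}^D$ and descends from $\phi_k(\overline{H}_{k,m})$, so its defect $\lambda_{k,m}$ appears in the denominator.

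Concretely, I would formalize the above by studying the homomorphism
$$\eta:\overline{H}_{F,m}\oplus\overline{H}_{K,m}\oplus\overline{H}_{K',m}\longrightarrow Y,\quad (a,b,c)\mapsto \phi_F(a)+\phi_K(b)+\phi_{K'}(c),$$
whose image is $\pi(C)$ and whose cokernel therefore has order $(Y:\pi(C))$. A careful analysis of $\ker\eta$ (whose main contribution comes from the diagonal embedding of $\overline{H}_{k,m}$ into all three summands) together with repeated applications of Lemma \ref{lem} to the component maps $\phi_E$ then produces the claimed ratio.

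The hard part will be the bookkeeping of the kernel of $\eta$: the three summands of the source are glued together by $\overline{H}_{k,m}$, which sits diagonally, but with finite defect $\lambda_{k,m}$ as a subgroup of $\overline{H}_{L,m}^D = \overline{H}_{L,m}^G \cap \overline{H}_{L,m}^{\langle\sigma\rangle}$; the factor $\lambda_{k,m}$ in the denominator of the desired identity is precisely this defect, appearing in the denominator because it comes from $\ker\eta$ rather than $\mathrm{coker}\,\eta$. The $\Delta$-action exchanging $K$ and $K'$ while fixing $F$ and $k$ is the essential symmetry making all the overlaps and the identification $\lambda_{K',m} = \lambda_{K,m}$ work out cleanly.
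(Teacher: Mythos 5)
Your reduction is correct and agrees with the paper's: since $\mathrm{tor}_{\mathbb{Z}}H_{L,m}=\mathrm{tor}_{\mathbb{Z}}H_{F,m}\subseteq C$, Lemma \ref{lem} gives $(H_{L,m}:C)=(\overline{H}_{L,m}:\pi(C))$, and the lemma is therefore equivalent to the single identity $(Y:\pi(C))=\lambda_{F,m}\lambda_{K,m}/\lambda_{k,m}$. But that identity is the entire mathematical content of the lemma, and you have not proved it: "a careful analysis of $\ker\eta$ \dots then produces the claimed ratio" is a plan, not an argument. Worse, the heuristic you offer points at the wrong answer. If each of $F,K,K'$ "contributes a factor $\lambda_{E,m}$" to the numerator and the common overlap contributes one factor $\lambda_{k,m}$ to the denominator, you get $\lambda_{F,m}\lambda_{K,m}^2/\lambda_{k,m}$, not $\lambda_{F,m}\lambda_{K,m}/\lambda_{k,m}$. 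In general, for subgroups $A\subseteq A'$, $B\subseteq B'$, $C\subseteq C'$ of an abelian group one only gets that $(A'B'C':ABC)$ \emph{divides} $(A':A)(B':B)(C':C)$; determining the actual amount of collapse is where all the arithmetic lives, and it is not a formal inclusion--exclusion.

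Concretely, the paper establishes $(Y:\pi(C))=\lambda_{F,m}\lambda_{K,m}/\lambda_{k,m}$ by splitting into a $2$-part and a $p$-part. At $2$ it uses Lemma \ref{restr} (restriction to a Sylow $2$-subgroup is an isomorphism on $H^j(D,-)$ for $\mathbb{Z}_2$-modules) to prove $(\lambda_{k,m})_2=\lambda_{K,m}$, which is exactly why only one factor of $\lambda_{K,m}$ survives and why the $2$-part of $(Y:\pi(C))$ is trivial. At $p$ it reduces to $(\overline{U}_{K}\overline{U}_{K'}\overline{U}_{L}^{G}:\overline{U}_{K}\overline{U}_{K'}\overline{U}_{F})=\lambda_{F,m}/(\lambda_{k,m})_p$, proved by a case distinction on whether $I_GU_L\cap U_k=I_GU_L\cap U_F$, using Lemma \ref{ultimo}, the identity $U_K^p\cap U_k=U_k^p$, and explicit element manipulations with $\sigma$ and $\tau$. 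None of this input appears in your proposal; the "bookkeeping of $\ker\eta$" you defer is precisely this nontrivial arithmetic, so as written the proof has a genuine gap. (A minor point: $\lambda_{K',m}=\lambda_{K,m}$ holds because $K'=\tau(K)$, i.e.\ by Galois conjugacy, not by the action of $\Delta$, which does not permute $K$ and $K'$.)
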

\begin{proof}
Using the notation introduced in Section \ref{secp}, we have, thanks to (\ref{trivella}) and Lemma \ref{torsion},
$$(H_{L,\,m}:H_{K,\,m}H_{K',\,m}H_{F,\,m})=(U_{L,\,m}:U_{K,\,m}U_{K',\,m}U_{F,\,m})=(\overline{U}_{L,\,m}:\overline{U}_{K,\,m}\overline{U}_{K',\,m}\overline{U}_{F,\,m})$$
With similar arguments, it is easy to prove that we also have
$$(\overline{H}_{L,\,m}:\overline{H}_{L,\,m}^{\langle\sigma\rangle}\overline{H}_{L,\,m}^{^{\langle\tau^2\sigma\rangle}}\overline{H}_{L,\,m}^{G})=(\overline{U}_{L,\,m}:\overline{U}_{K,\,m}\overline{U}_{K',\,m}\overline{U}_{L,\,m}^{G})$$
Furthermore, $\lambda_{K,\,m}$ is a power of $2$ and if, for a prime $\ell$, $(\lambda_{k,\,m})_\ell$ denotes the exact power of $\ell$ dividing $\lambda_{k,\,m}$, then we have
$$(\lambda_{k,\,m})_2=\lambda_{K,\,m}$$ 
(use (\ref{pinardemajadahonda}) and Lemma \ref{restr} applied to $M=\mathrm{tor}_\mathbb{Z}H_{L,\,m}\otimes \mathbb{Z}_2$ and $M=H_{L,\,m}\otimes \mathbb{Z}_2$). Moreover $\lambda_{F,\,m}=1$ or $p$ (since $\mathrm{tor}(U_{F,\,m})$ is cyclic) and $(\lambda_{k,\,m})_p\leq \lambda_{F,\,m}$ (this is immediate, for example use (\ref{pinardemajadahonda})). Thus we are left to check that
\begin{equation}\label{cappero}
(\overline{U}_{K,\,m}\overline{U}_{K',\,m}\overline{U}_{L,\,m}^{G}:\overline{U}_{K,\,m}\overline{U}_{K',\,m}\overline{U}_{F,\,m})=\frac{\lambda_{F,\,m}}{(\lambda_{k,\,m})_p}
\end{equation}
Note that 
$$(\overline{U}_{K,\,m}\overline{U}_{K',\,m}\overline{U}_{L,\,m}^{G}:\overline{U}_{K,\,m}\overline{U}_{K',\,m}\overline{U}_{F,\,m})=(\overline{U}_{L,\,m}^{G}:(\overline{U}_{K,\,m}\overline{U}_{K',\,m}\overline{U}_{F,\,m})^{G})$$
In the rest of the proof we let the subscript $m$ drop and we shall suppose that $\overline{U}_{L}^{G}\neq \overline{U}_F$ (otherwise (\ref{cappero}) is trivially true).\\
Suppose first that $I_GU_L\cap U_k\ne I_GU_L\cap U_F$ (which is equivalent to $\lambda_{F}=(\lambda_{k})_p=p$ by Lemma \ref{ultimo}). Let $u\in U_L$ be such that the class of $[u]\in \overline{U}_L$ is nontrivial in $\overline{U}_L^G/\overline{U}_F$: then $u^{\tau-1}=\zeta$ generates $I_GU_L\cap U_F$ which is cyclic of order $p$ (note that we also have $u^{\tau^{-1}-1}=\zeta^{-1}$). Then $u^{(1+\sigma)}\in U_K\subseteq U_{K}U_{K'}U_{F}$ and
$$(u^{(1+\sigma)})^{(\tau-1)}=u^{(\tau-1)(1+\sigma)}=u^{\tau-1}u^{(\tau-1)\sigma}=\zeta^{1-\sigma}=\zeta^2$$
because $\sigma$ acts as $-1$ on $I_GU_L\cap U_F$ (the latter being different from $I_GU_L\cap U_k$ by assumption).  
This shows at once that $[u]^{1+\sigma}\in(\overline{U}_{K}\overline{U}_{K'}\overline{U}_{F})^{G}$ but $[u]^{1+\sigma}\notin \overline{U}_F$ (since $\sigma$ acts trivially on $\overline{U}_L^G/\overline{U}_F$ and $[u]\notin \overline{U}_F$) and therefore $\overline{U}_{L}^{G}=(\overline{U}_{K}\overline{U}_{K'}\overline{U}_{F})^{G}$ (since $\overline{U}_{L}^{G}/\overline{U}_{F}$ has order $p$).\\ 
Now suppose that $I_GU_L\cap U_k= I_GU_L\cap U_F$ and we have to show that $(\overline{U}_{K}\overline{U}_{K'}\overline{U}_{F})^{G}=\overline{U}_F$. First of all we observe that 
\begin{equation}\label{yolanda}
U_K^p\cap U_k=U_k^p
\end{equation} 
In fact, of course $U_K^p\cap U_k\supseteq U_k^p$. Conversely, if $u\in U_K$ and $u^p\in U_k$, then $u^{p(\tau-1)}=1$. In particular $u^{\tau-1}\in I_GU_L\cap U_L[p]= I_GU_L\cap U_F=I_GU_L\cap U_k$ (use Lemma \ref{torsion} and Lemma \ref{ultimo}). But this means that
$$u^{\sigma(\tau-1)}=u^{\tau-1}$$
and since $u^{\sigma(\tau-1)}=u^{\tau^{-1}-1}$ (because $u^{\tau^{-1}-1}=u^{-\tau+1}\in U_k$), this implies that $u^{\tau^2}=u$. Therefore $u\in U_F\cap U_K=U_k$ and therefore $U_K^p\cap U_k\subseteq U_k^p$. Now consider an element in $(\overline{U}_{K}\overline{U}_{K'}\overline{U}_{F})^G$: it can be written as $[uv^{\tau}w]$ with $u,\,v\in U_K$ and $w\in U_F$ satisfying
$$(uv^\tau w)^{\tau-1}\in \mathrm{tor}U_L$$
Actually $(uv^\tau w)^{\tau-1}=(uv^\tau)^{\tau-1}=\xi\in U_k[p]$ (thanks to $I_GU_L\cap U_k= I_GU_L\cap U_F$, Lemma \ref{torsion} and Lemma \ref{ultimo}). This implies that
$$\frac{u^{p\tau}}{v^{p\tau}}=\frac{u^p}{v^{p\tau^2}}\in U_{K'}$$
In particular
$$\left(\frac{u^p}{v^{p\tau^2}}\right)^{\tau^2\sigma}=\frac{u^p}{v^{p\tau^2}}$$
A quick calculation then shows that $(uv)^p=(uv)^{p\tau^2}$. This means that $(uv)^p\in U_F\cap U_K=U_k$. Then 
$(uv)^p\in U_K^p\cap U_k=U_k^p$ thanks to (\ref{yolanda}). Then $u=v^{-1}z$ with $z\in U_k$ and therefore
$$(uv^\tau w)^{\tau-1}=(uv^\tau)^{\tau-1}=\left(\frac{v^\tau}{v}\right)^{\tau-1}=\xi \in U_k[p]$$
This implies that 
$$\frac{v^{p\tau}}{v^p}\in U_F\cap I_GU_L=U_F[p]\cap I_GU_L$$
which means $v^{\tau}/v\in \mathrm{tor}_{\mathbb{Z}_p}U_L\subseteq U_F$. Then $\xi=1$ and $[uv^\tau w]\in \overline{U}_F$. This concludes the proof.
\end{proof}

Collecting all the results we have proved so far, we get the proof of Formula \ref{ind} and therefore of the higher Brauer-Kuroda relation (\ref{bk}).

\end{document}